\newtheorem{theorem}{Theorem}[section]
\newtheorem{proposition}[theorem]{Proposition}
\newtheorem{lemma}[theorem]{Lemma}
\newtheorem{remark}[theorem]{Remark}
\theoremstyle{definition}
\newcommand{\bel}{\begin{equation} \label}
\newcommand{\ee}{\end{equation}}
\newcommand{\C}{{\mathbb C}}
\newcommand{\R}{{\mathbb R}}
\newcommand{\N}{{\mathbb N}}
\newcommand{\im}{\mathfrak I}
\def\epsilon{\varepsilon}
\def\phi {\varphi}
\def\beq{\begin{equation}}
\def\eeq{\end{equation}}
\newcommand{\bea}{\begin{eqnarray}}
\newcommand{\eea}{\end{eqnarray}}
\newcommand{\beas}{\begin{eqnarray*}}
\newcommand{\eeas}{\end{eqnarray*}}
\renewcommand{\leq}{\leqslant}
\renewcommand{\geq}{\geqslant}
\providecommand{\abs}[1]{\left\lvert#1\right\rvert}
\providecommand{\norm}[1]{\left\lVert#1\right\rVert}
\begin{document}

\title[Borg-Levinson theorem for magnetic Schr\"odinger operators]{A multidimensional Borg-Levinson theorem for magnetic Schr\"odinger operators with partial spectral data }

\author[Yavar Kian]{Yavar Kian}
\address{Aix Marseille Univ, Universit\'e de Toulon, CNRS, CPT, Marseille, France.}
\email{yavar.kian@univ-amu.fr}
\maketitle

\begin{abstract} We  consider  the multidimensional Borg-Levinson theorem of determining both the magnetic field  $dA$ and the electric potential $V$, appearing in the Dirichlet  realization of the magnetic  Schr\" odinger operator $H=(-{\rm i}\nabla+A)^2+V$ on a bounded domain $\Omega\subset\R^n$, $n\geq2$, from partial knowledge of the boundary spectral data of $H$. The full boundary spectral data are given by  the set  $\{(\lambda_{k},{\partial_\nu \phi_{k}}_{|\partial\Omega}):\ k\geq1\}$, where $\{ \lambda_k:\ k\in \N^* \}$ is the non-decreasing sequence of eigenvalues of $H$,  $\{ \phi_k:\ k\in \N^* \}$ an associated Hilbertian basis of eigenfunctions and $\nu$ is the unit outward normal vector to $\partial\Omega$. We prove that some asymptotic knowledge of  $(\lambda_{k},{\partial_\nu \phi_{k}}_{|\partial\Omega})$ with respect to $k\geq1$ determines uniquely the magnetic field  $dA$ and the electric potential $V$.
\end{abstract}
\vspace{1cm}

\noindent{\bf Keywords: }{
Inverse spectral problem, Borg-Levinson theorem,  magnetic Schr\"odinger operators.\\

\noindent {\bf 2010 AMS Subject Classification: } 
Primary: 35R30, 35J10; 
secondary: 35P99.}

\section{Introduction}
\subsection{Statement of the problem}
 We consider $\Omega \subset {\Bbb R}^n$, $n\geq2$,   a $\mathcal C^{1,1}$ bounded and  connected domain  such that $\R^n\setminus\Omega$ is also connected. We set $\Gamma=\partial\Omega$. Let $A\in W^{1,\infty}(\Omega, \R^n)$, $V\in L^\infty(\Omega,\R)$ and consider the  magnetic Schr\"odinger operator $H=(-{\rm i}\nabla+A)^2+V$ acting on $L^2(\Omega)$ with domain $D(H)=\{v\in H^1_0(\Omega):\ (-{\rm i}\nabla+A)^2v\in L^2(\Omega)\}$.

Let $A_j\in W^{1,\infty}(\Omega, \R^n)$, $V_j\in L^\infty(\Omega,\R)$,  $j=1,2$, and consider the  magnetic Schr\"odinger operators $H_j=H$ for $A=A_j$ and $V=V_j$,  $j=1,2$. We say that $H_1$ and $H_2$ are  gauge equivalent if there exists $p\in W^{2,\infty}(\Omega,\R)\cap H^1_0(\Omega)$ such that $H_2=e^{-{\rm i}p}H_1e^{{\rm i}p}$.

It is well known that $H$ is a selfadjoint operator. By the compactness of the embedding $H^1_0(\Omega) \hookrightarrow L^2(\Omega)$, the spectrum of $H$ is
purely discrete. We note $\{ \lambda_k:\ k\in \N^* \}$  the non-decreasing sequence of eigenvalues of $H$ and  $\{ \phi_k:\ k\in \N^* \}$ an associated Hilbertian basis of eigenfunctions. In the present paper we consider the Borg-Levinson inverse spectral problem of determining uniquely $H$,   modulo gauge equivalence, from partial knowledge of the boundary spectral data $\{ (\lambda_k,{\partial_\nu\phi_k}_{|\Gamma}):\ k\in \N^* \}$ with $\nu$ the outward unit normal vector to $\Gamma$. Namely, we prove that some asymptotic knowledge of $(\lambda_k,{\partial_\nu\phi_k}_{|\Gamma})$ with respect to $k\in \N^* $ determines uniquely the operator $H$ modulo gauge transformation.
\subsection{ Borg-Levinson   inverse spectral problems }

It is Ambartsumian  who first investigated in 1929 the inverse spectral problem of determining the real potential $V$ appearing in the Schr\"odinger operator $H=-\Delta+V$, acting in $L^2(\Omega)$, from partial spectral data of $H$. For $\Omega=(0,1)$, he proved in \cite{A} that $V=0$ if the spectrum of the Neumann realization of $H$ equals $\{ k^2:\ k \in \N \}$. For the same operator, but endowed with homogeneous Dirichlet boundary conditions, Borg \cite{B} and Levinson \cite{L} established that the Dirichlet spectrum $\{ \lambda_k:\ k \in \N^*\}$ does not uniquely determine $V$. They showed that additional spectral data, namely $\{ \| \phi_k \|_{L^2(0,1)}:\ k \in \N^* \}$, where $\{ \varphi_k:\ k \in \N^* \}$ is an $L^2(0,1)$-orthogonal basis of eigenfunctions of $H$ obeying the condition $\varphi_k'(0)=1$, is needed. Gel'fand and Levitan proved in \cite{GL} that uniqueness is still valid upon substituting the terminal velocity $\phi_k'(1)$ for $\| \phi_k \|_{L^2(0,1)}$ in the one-dimensional Borg and Levinson theorem.

In 1988, Nachman, Sylvester, Uhlmann \cite{NSU} and  Novikov \cite{No} proposed a multidimensional formulation of the result of Borg  and Levinson. Namely, they proved that the  boundary spectral data $\{ (\lambda_k ,{\partial_\nu \phi_k}_{\vert\partial\Omega}):\ k \in \N^* \}$, where $\nu$ denotes the outward unit normal vector to $\partial \Omega$ and $(\lambda_k, \phi_k)$ is the $k^{\rm th}$ eigenpair of $-\Delta+V$, determines uniquely  the Dirichlet realization of the operator $-\Delta+V$.  The initial formulation of the multidimensional Borg-Levinson theorem by \cite{NSU} and \cite{No}  has been improved in several ways by various authors. Isozaki \cite{I} (see also \cite{Ch}) extended the result of \cite{NSU} when finitely many eigenpairs remain unknown, and, recently, Choulli and Stefanov \cite{CS} claimed uniqueness in the determination of $V$ from the asymptotic behavior of $(\lambda_k,{\partial_\nu \phi_k}_{\vert\Gamma})$ with respect to $k$.
Moreover, Canuto and Kavian \cite{ CK1, CK2} considered the determination  of the conductivity $c$, the electric potential $V$ and the weight $\rho$   from  the boundary spectral data of the
operator $\rho^{-1}(-div (c \nabla\cdot) + V)$ acting on the weighted space $L^2_\rho(\Omega)$ endowed with either Dirichlet or Neumann boundary conditions. Namely, \cite{ CK1, CK2} proved that the boundary spectral data of $\rho^{-1}(-div (c \nabla\cdot) + V)$ determines uniquely two of the three coefficients $c$, $V$ and $\rho$. The case of magnetic Schr\"odinger operator has been treated by \cite{Ser} who determined both the magnetic field $dA$ and the electric potential $V$ of the operator  $H=(-{\rm i}\nabla+A)^2+V$. Here the 2-form $dA$ of a vector valued function $A=(a_1,\ldots,a_n)$ is defined by
\[dA=\sum_{i<j}(\partial_{x_j}a_i-\partial_{x_i}a_j)dx_j\wedge dx_i.\]

All the above mentioned results were obtained with $\Omega$ bounded and operators of purely discrete spectral type. In some recent work \cite{KKS} examined a Borg-Levinson inverse problem stated in an  infinite cylindrical waveguide for Schr\"odinger operators with purely absolutely continuous spectral type. More precisely, \cite{KKS} proved that a real potential $V$ which is $2\pi$-periodic along the axis of the waveguide is uniquely determined 
by  some asymptotic knowledge of the boundary  Floquet spectral data of the Schr\"odinger operator $-\Delta+V$ with Dirichlet boundary conditions. 

Finally, let us mention for the sake of completeness that the stability issue in the context of Borg-Levinson inverse problems was examined in \cite{ BCY, BF, Ch, CS, KKS} and
that \cite{BK,BF, KK} established related results  on Riemannian manifolds. We also precise that \cite{Mo,SU1,SU2} have proved  stability estimates in the recovery of  coefficients from the hyperbolic Dirichlet-to-Neumann map which is equivalent to  the determination of general Schr\"odinger operators from boundary spectral data.

\subsection{Main result}
Let $A_j\in W^{1,\infty}(\Omega, \R^n)$, $V_j\in L^\infty(\Omega,\R)$ and consider the  magnetic Schr\"odinger operators $H_j=H$ for $A=A_j$ and $V=V_j$, $j=1,2$. Further we note $(\lambda_{j,k},\phi_{j,k})$, $k\geq1$, the $k^{\rm th}$ eigenpair of $H_j$, for $j=1,2$. Our main result can be stated as follows.

\begin{theorem}
\label{thm-1} 
 We fix $\Omega_1$ an arbitrary open neighborhood of $\Gamma$ in  $\Omega$ $($$\Gamma\subset\overline{\Omega_1}$ and $\Omega_1\subsetneq\Omega$$)$. For $j = 1, 2$, let $V_j \in L^\infty(\Omega,\R)$ and  let $A_j\in  \mathcal C^1(\overline{\Omega},\R^n)$ fulfill 
\bel{t1a}A_1(x)=A_2(x),\quad x\in\Omega_1.\ee
Assume that the conditions
\begin{equation}\label{tt2a} 
\lim_{k\to+\infty}\abs{\lambda_{1,k}-\lambda_{2,k}}=0,\quad \sum_{k=1}^{+\infty}\norm{\partial_\nu\phi_{1,k}-\partial_\nu\phi_{2,k} }_{L^2(\Gamma)}^2<\infty\end{equation}
hold simultaneously. Then, we have $dA_1=dA_2$ and $V_1 = V_2$.
\end{theorem}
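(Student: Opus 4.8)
The plan is to convert the boundary spectral data into information on the Dirichlet-to-Neumann (DtN) map of $H_j$ and then to invoke the inverse boundary value theory for the magnetic Schr\"odinger operator at energy $\lambda$. Fix $\lambda$ in the resolvent set of both operators and, for boundary data $f$, let $u_{j,f}$ solve $(H_j-\lambda)u=0$ in $\Omega$ with $u=f$ on $\Gamma$; define $\Lambda_j(\lambda)f=(\partial_\nu+\mathrm{i}A_j\cdot\nu)u_{j,f}|_\Gamma$. Writing $\psi_{j,k}=\partial_\nu\phi_{j,k}|_\Gamma$ and applying Green's formula to $u_{j,f}$ against the eigenfunctions $\phi_{j,k}$ (whose boundary traces vanish, so the magnetic part of the conormal derivative drops out), I would first establish the spectral representation
\[ u_{j,f}=\sum_{k\geq1}\frac{\langle f,\psi_{j,k}\rangle_{L^2(\Gamma)}}{\lambda_{j,k}-\lambda}\,\phi_{j,k}, \]
together with the induced bilinear representation of $\langle\Lambda_j(\lambda)f,g\rangle_{L^2(\Gamma)}$ as a series in the data $\{(\lambda_{j,k},\psi_{j,k})\}$. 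The hypothesis \eqref{t1a} that $A_1=A_2$ on $\Omega_1$ guarantees that the two conormal derivatives are taken with the same boundary operator, so that $\Lambda_1(\lambda)-\Lambda_2(\lambda)$ is expressed purely through differences of the spectral data.

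The core analytic step is to show that \eqref{tt2a} forces $\norm{\Lambda_1(\lambda)-\Lambda_2(\lambda)}$, as an operator between suitable Sobolev spaces on $\Gamma$, to decay with a quantitative rate as $\lambda\to-\infty$ along the negative real axis. I would split the difference of the two series into a part carrying the eigenvalue discrepancies, using
\[ \frac{1}{\lambda_{1,k}-\lambda}-\frac{1}{\lambda_{2,k}-\lambda}=\frac{\lambda_{2,k}-\lambda_{1,k}}{(\lambda_{1,k}-\lambda)(\lambda_{2,k}-\lambda)}, \]
and a part carrying the eigenfunction discrepancies $\psi_{1,k}-\psi_{2,k}$. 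For the first part I would exploit $\abs{\lambda_{1,k}-\lambda_{2,k}}\to0$: fixing $\epsilon>0$, the tail $k>K$ is bounded by $\epsilon\sum_{k>K}\norm{\psi_{j,k}}^2/\abs{\lambda_{j,k}-\lambda}^2$, while the finite head tends to $0$ as $\lambda\to-\infty$. For the second part I would use Cauchy--Schwarz together with $\sum_k\norm{\psi_{1,k}-\psi_{2,k}}_{L^2(\Gamma)}^2<\infty$. Both estimates rest on two standard ingredients: Weyl's asymptotics $\lambda_{j,k}\sim c\,k^{2/n}$ and the boundary trace bound $\norm{\partial_\nu\phi_{j,k}}_{L^2(\Gamma)}^2\lesssim\lambda_{j,k}$; to make the relevant series converge one must test against smooth boundary data, for which $\langle f,\psi_{j,k}\rangle$ gains decay in $k$ by elliptic regularity. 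Carrying out these summations so as to extract a decay of exponential type, roughly $e^{-c\sqrt{-\lambda}}$, is where the bulk of the work lies.

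It then remains to feed this decay into the inverse boundary value problem at energy $\lambda$, where $(H_j-\lambda)u=0$ is the magnetic Schr\"odinger equation with potential $V_j-\lambda$. I would construct complex geometric optics solutions $u_j=e^{\zeta_j\cdot x}(a_j+r_j)$ with $\zeta_j\cdot\zeta_j=\lambda$, $\zeta_1+\zeta_2=\mathrm{i}\xi$ and $\abs{\mathrm{Re}\,\zeta_j}=s\to\infty$, whose boundary traces grow at most like $e^{sR}$ with $R=\sup_\Gamma\abs{x}$, and insert them into an Alessandrini-type identity whose bulk integrand is, to leading order in $s$, $-2\mathrm{i}(A_1-A_2)\cdot\nabla u_1\,\overline{u_2}$ and, at the next order, $(V_1-V_2)u_1\overline{u_2}$ (the quadratic term $\abs{A_1}^2-\abs{A_2}^2$ being absorbed once the field is known), the boundary contribution being exactly $\langle(\Lambda_1(\lambda)-\Lambda_2(\lambda))u_1|_\Gamma,u_2|_\Gamma\rangle$ since $A_1=A_2$ near $\Gamma$. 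Sending $\lambda\to-\infty$ and $s\to\infty$ at balanced rates (for instance $s=(-\lambda)^{1/4}$), the right-hand side is bounded by $\norm{\Lambda_1(\lambda)-\Lambda_2(\lambda)}\,e^{2sR}$ and tends to $0$ by the previous step, while stationary phase turns the left-hand side into the Fourier transform of the coefficient differences at $\xi$: the leading order yields $\widehat{dA_1}(\xi)=\widehat{dA_2}(\xi)$ (only the gauge-invariant antisymmetric part surviving) and the subsequent order then yields $\widehat{V_1}(\xi)=\widehat{V_2}(\xi)$; since $\xi$ is arbitrary, $dA_1=dA_2$ and $V_1=V_2$. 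I expect the main obstacle to be precisely this coupling of limits: the boundary traces of the geometric optics solutions grow exponentially in $s$ and their remainder estimates degrade as $\abs{\lambda}\to\infty$ (the effective potential $V_j-\lambda$ becoming large), so the decay rate of $\norm{\Lambda_1(\lambda)-\Lambda_2(\lambda)}$ extracted in the second step must be strong enough, and the construction organized semiclassically in $h=(-\lambda)^{-1/2}$, to overwhelm this growth in the joint limit.
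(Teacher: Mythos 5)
Your overall architecture (spectral representation of solutions, reduction to a decay statement for the Dirichlet-to-Neumann maps, then special solutions and an Alessandrini identity) is the right family of ideas, but the specific route you choose fails at its central step, and for a structural reason. You propose to send $\lambda\to-\infty$ along the real axis and to use complex geometric optics solutions with $|\mathrm{Re}\,\zeta_j|=s\to\infty$, whose boundary traces grow like $e^{sR}$; to beat that growth you need a \emph{quantitative} (indeed exponential) decay of $\norm{\Lambda_1(\lambda)-\Lambda_2(\lambda)}$. But the hypotheses \eqref{tt2a} are purely qualitative: $\abs{\lambda_{1,k}-\lambda_{2,k}}\to0$ with no rate, and square-summability of $\partial_\nu\phi_{1,k}-\partial_\nu\phi_{2,k}$ with no rate. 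The head/tail splitting you describe yields only $\limsup_{\lambda\to-\infty}(\cdots)\leq C\epsilon$ for every $\epsilon$, i.e.\ convergence to $0$ with no modulus whatsoever; no power of $\abs{\lambda}^{-1}$, let alone $e^{-c\sqrt{-\lambda}}$, can be extracted. This is precisely why the paper does \emph{not} take $\lambda\to-\infty$ real: following Isozaki, it puts $\lambda=(\tau+{\rm i})^2$ so that $\im\lambda=2\tau\to+\infty$ while $\im\sqrt{\lambda}\,\eta_j$ stays bounded. The ansatzs $e^{{\rm i}\sqrt{\lambda}\eta_1\cdot x}e^{{\rm i}\psi_1}$ and $e^{-{\rm i}\sqrt{\lambda}\eta_2\cdot x}b_2e^{-{\rm i}\psi_2}$ are then uniformly bounded on $\overline{\Omega}$, the remainder is controlled by $\norm{(H_j-\lambda)^{-1}}\leq(2\tau)^{-1}$, and only the \emph{qualitative} vanishing of the boundary pairing $S_1-S_2$ is needed — which is exactly what \eqref{tt2a} delivers. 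Your balanced-rate idea $s=(-\lambda)^{1/4}$ cannot be made to work because there is no rate on the other side of the balance.

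Two further points you gloss over. First, the series $\sum_k\frac{\langle f,\psi_{j,k}\rangle}{\lambda-\lambda_{j,k}}\phi_{j,k}$ converges only in $L^2(\Omega)$, so $\partial_\nu u_{j,f}$ — hence your ``bilinear representation of $\langle\Lambda_j(\lambda)f,g\rangle$ as a series in the data'' — is not directly available; the paper must introduce the regularized difference $v_{\lambda,\mu}=u_\lambda-u_\mu$ (whose normal derivative series converges in $H^{1/2}(\Gamma)$) together with the auxiliary term $\partial_\nu z_\mu$, shown to vanish as $\mu\to-\infty$ by a PDE argument using \eqref{t1a} and the interior gradient bound of Lemma \ref{lem:Resolution:grad}, not by spectral manipulations. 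Second, the leading-order bulk term is not directly $\widehat{dA_1}-\widehat{dA_2}$: because the amplitudes carry $e^{{\rm i}\psi}$ with $\psi$ the X-ray transform of the unknown $A_1-A_2$, one must choose the amplitude $b$ as in \eqref{b2} and integrate by parts twice along lines (Lemma \ref{l4}) to linearize and isolate $\mathcal F[\partial_{x_j}a_i-\partial_{x_i}a_j]$; and the passage from $dA_1=dA_2$ to $V_1=V_2$ requires an explicit gauge transformation reducing to the case $A_1=A_2$ before the zeroth-order identity can be read off. These are repairable details, but the rate issue in your second step is not: under \eqref{tt2a} the real-$\lambda$/growing-CGO strategy cannot close.
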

Note that condition \eqref{t1a} corresponds to the knowledge of the magnetic potential on a neighborhood of the boundary.

Let us observe that, as mentioned by \cite{CS,KKS}, Theorem \ref{thm-1} can be considered as a uniqueness theorem under the assumption that
the spectral data are asymptotically "very close". Conditions \eqref{tt2a}  are similar to the one considered by \cite{KKS} and they are weaker than the requirement that 
\[\abs{\lambda_{1,k}-\lambda_{2,k}}\leq Ck^{-\alpha},\quad \norm{\partial_\nu\phi_{1,k}-\partial_\nu\phi_{2,k} }_{L^2(\Gamma)}\leq Ck^{-\beta}\]
for some $\alpha>1$ and $\beta>1-{1\over 2n}$, considered in \cite[Theorem 2.1]{CS}. Note also that conditions \eqref{tt2a} are weaker than the knowledge of the boundary spectral data  with a finite number of data missing considered by \cite{I}.

Let us remark that there is an obstruction to uniqueness given by the gauge invariance of  boundary spectral data for magnetic Shr\"odinger operators. More precisely, let $p\in \mathcal C^\infty_0(\Omega\setminus\overline{\Omega_1})\setminus\{0\}$ and assume that  $A_1=\nabla p +A_2\neq A_2$, $V_1=V_2$. Then, we have $H_1=e^{-{\rm i}p}H_2e^{{\rm i}p}$ and one can check that we can choose the spectral data of $H_1$ and $H_2$ in such a way that the conditions
 \[{\partial_\nu\phi_{1,k}}_{|\Gamma}={\partial_\nu\phi_{2,k}}_{|\Gamma},\quad \lambda_{1,k}=\lambda_{2,k},\quad k\in\mathbb{N}^*\]
are fulfilled. Therefore, conditions \eqref{t1a}-\eqref{tt2a} are fulfilled but $H_1\neq H_2$. Nevertheless,  assuming \eqref{t1a} fulfilled, the conditions $dA_1=dA_2$ and $V_1 = V_2$ imply that $H_1$ and $H_2$ are gauge equivalent. Therefore, Theorem \ref{thm-1} is equivalent to the unique determination of  magnetic Schr\"odinger operators  modulo gauge transformation from the asymptotic  knowledge of the boundary spectral data given by  conditions \eqref{tt2a}.

We stress out that the problem under examination in this text is a  Borg-Levinson inverse problem for the  magnetic Schr\"odinger operator $H=(-{\rm i}\nabla+A)^2+V$.  To our best knowledge, there are only two multi-dimensional Borg-Levinson uniqueness result for magnetic Schr\"odinger operators  available in the mathematical literature, \cite[Theorem B]{KK} and \cite[Theorem 3.2]{Ser} (we refer also to \cite{Ni} for related inverse scattering results). In \cite{KK}, the authors considered  general magnetic Schr\"odinger operators with smooth coefficients on a smooth connected Riemannian manifold and they proved unique determination of these operators modulo gauge invariance from the knowledge of the boundary spectral data with a missing finite number of data. In \cite{Ser}, Serov treated  this problem on a bounded domain of $\R^n$, and he proved that, for $A\in W^{1,\infty}(\Omega,\R^n)$ and $V\in L^\infty(\Omega,\R)$,  the full boundary spectral data $\{ (\lambda_k ,{\partial_\nu \phi_k}_{|\Gamma}):\ k \in \N^* \}$ determines uniquely  $dA$ and $V$. In contrast to \cite{KK, Ser}, in the present paper we prove that the asymptotic knowledge  of the boundary spectral data, given by the conditions \eqref{tt2a},   is sufficient for the unique determination of $dA$ and $V$. To our best knowledge, conditions \eqref{tt2a} are the weakest conditions on boundary spectral data that guaranty uniqueness of magnetic Schr\"odinger operators modulo gauge transformation. Moreover, our uniqueness result is stated with   conditions similar to   \cite[Theorem 1.4]{KKS}, which seems to be the most precise  Borg-Levinson uniqueness result so far for Schr\"odinger operators without magnetic potential ($A=0$).

An important ingredient in our analysis is a suitable representation that allows to express the magnetic potential $A$ and the electric potential $V$ in terms of Dirichlet-to-Neumann map associated to the equations $(-{\rm i}\nabla +A)^2u+Vu-\lambda u=0$ for some $\lambda\in\mathbb C$. In \cite{I} Isozaki applied a similar approach to the Schr\"odinger operator $-\Delta+V$ with Dirichlet boundary condition\footnote{This argument was inspired by the Born approximation method of the scattering theory.} and \cite{CS,KKS} applied the representation formulas of \cite{I}. Inspired by the construction of complex geometric optics solutions of \cite{BC,FKSU,KLU,KU,NSU1,Sa1,Su}  we prove that the approach of  \cite{CS,I,KKS} can be extended to magnetic Schr\"odinger operators. More precisely, we derive two representation formulas that allow to recover both the magnetic field and the electric potential of magnetic Schr\"odinger operators  which means recovery of both coefficients of order one and zero in contrast to \cite{CS,I,KKS} where only determination of coefficients of order zero is considered. This paper is the first where the extension of the approach developed by \cite{I} to more general coefficients than coefficients of order zero is considered. Note also that our approach make it possible to prove this extension  without imposing important assumptions of regularity of the  admissible coefficients.

We believe that the approach developed in the present paper can be used for results of stability in the determination of the magnetic field $dA$ and the electric potential $V$ similar to  \cite[Theorem 1.3]{KKS}. Indeed, following  the strategy set in this paper we expect a stability estimate associated to the the determination of the magnetic field $dA$. The main issue comes from the stability in the determination of the electric potential $V$. Nevertheless, we believe that this problem can be solved by adapting suitably the  argument developed in \cite{T} related to the inversion of the $d$ operator on differential forms restricted to the right subspaces.

\subsection{Outline}
This paper is organized as follows. In Section 2 we consider some useful preliminary results concerning solutions of equations of the form $(-{\rm i}\nabla +A)^2u+Vu-\lambda u=0$ for some $\lambda\in\mathbb C\setminus\sigma(H)$. In Section 3 we introduce two representation formulas making the connection between the Dirichlet-Neumann map associated with the previous equations and the couple  $(A,V)$ of  magnetic and electric potential. Finally, in section 4 we combine all these results and we prove Theorem \ref{thm-1}.

\section{Notations and preliminary results}
\label{sec:Prelim}

In this section we introduce some notations and we give some properties of solution of the equation $(-{\rm i}\nabla +A)^2u+Vu-\lambda u=0$.
We  denote  by $\langle f,\psi\rangle$ the duality between $\psi \in H^{1/2}(\Gamma)$ and $f$ belonging to the dual $H^{-1/2}(\Gamma)$ of $H^{1/2}(\Gamma)$. However, when in $\langle f,\psi \rangle $ both $f$ and $\psi$ belong to $L^2(\Gamma)$, to make things simpler $\langle \cdot,\cdot\rangle$ can be interpreted as the scalar product of $L^2(\Gamma)$, namely
$$\langle f,\psi\rangle = \int_{\Gamma} f(x)\, \overline{\psi(x)}\,d\sigma(x).$$
\medskip
We introduce the operator $H$  defined as
\begin{equation}\label{eq:Def-A-theta} H u := (-{\rm i}\nabla+A)^2 u + V u,\quad u\in D(H) := \left\{\psi \in H^1_0(\Omega) \; ; \;(-{\rm i}\nabla+A)^2\psi \in L^2(\Omega)\right\}.\end{equation}
Recall that $H$ is associated to the quadratic form $b$ given by
$$b(u,v) = \int_{\Omega}(-{\rm i}\nabla+A) u(x)\cdot\overline{(-{\rm i}\nabla+A) v(x)}\,dx + \int_{\Omega}V(x)\,u(x)\overline{v(x)}\,dx,\quad u,v\in H^1_0(\Omega).$$
Moreover,  the spectrum of $H$ is discrete and composed of the non-decreasing sequence of eigenvalues denoted by $\sigma (H) = \{\lambda_{k}\; ; \; k \geq 1\}$. If we write $V = V^+ - V^-$, with $V^\pm := \max(0,\pm V)$, we have that the spectrum $\sigma(H)$ of $H$ is contained into 
$ [-\|V^-\|_{L^\infty(\Omega)}, + \infty)$.
According to  \cite[Theorem 2.2.2.3]{Gr},   we can show that $D(H)$ embedded continuously into $H^2(\Omega)$. Therefore the eigenfunctions  $(\phi_{k})_{k\geq1}$ of $H$, that form an Hilbertian basis, are lying in $H^2(\Omega)$ and we have ${\partial_\nu \phi_k}_{|\Gamma}\in H^{1/2}(\Gamma)$. 

From now on, we fix $f \in H^{1/2}(\Gamma)$ and $\lambda \in {\Bbb C} \setminus \sigma (H)$ and we consider the problem
\bel{eq1}
\left\{ 
\begin{array}{rcll} 
(-{\rm i}\nabla+A)^2 u + Vu -\lambda u & = & 0, & \mbox{in}\ \Omega ,\\ 
u(x) & = & f(x),& x\in \Gamma.
\end{array}\right.
\ee
We start with two results related to the asymptotic behavior of solutions of \eqref{eq1} as $\lambda\to-\infty$.

\begin{lemma}\label{lem:Resolution}
For any $f \in H^{1/2}(\Gamma)$ and $\lambda \in {\Bbb C} \setminus \sigma (H)$, there exists a unique solution $u \in H^1(\Omega)$ to \eqref{eq1}
which can be written as 
\begin{equation}\label{eq:Sol-Series}
u_{\lambda} := u = \sum_{k \geq 1} {\alpha_{k} \over \lambda - \lambda_{k}}\,  \phi_{k},
\end{equation}
where for convenience we set
\begin{equation}\label{eq:Def-psi-alpha}
h_{k} := {\partial_\nu \phi_{k}}_{|\Gamma} , \qquad
\mbox{and}\qquad 
 \alpha_{k} :=\langle f,h_k \rangle.
\end{equation}
Moreover, we have
$$\|u_{\lambda}\|^2_{L^2(\Omega)} = \sum_{k \geq 1} {|\alpha_{k}|^2 \over |\lambda - \lambda_{k}|^2} \to 0 \qquad\mbox{as }\, \lambda \to -\infty.$$

\end{lemma}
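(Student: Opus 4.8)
The plan is to handle the three assertions in turn: existence and uniqueness of $u\in H^1(\Omega)$, the spectral expansion \eqref{eq:Sol-Series}, and the decay of the $L^2$ norm. For the first I would reduce \eqref{eq1} to a homogeneous boundary value problem by choosing a lift $F\in H^1(\Omega)$ with $F_{|\Gamma}=f$ and seeking $u=F+w$ with $w\in H^1_0(\Omega)$. The function $w$ must satisfy $b(w,v)-\lambda\langle w,v\rangle=-b(F,v)+\lambda\langle F,v\rangle$ for all $v\in H^1_0(\Omega)$, whose right-hand side is a bounded antilinear form on $H^1_0(\Omega)$. The sesquilinear form $a_\lambda:=b-\lambda\langle\cdot,\cdot\rangle$ is a compact perturbation of a coercive form (coercivity holding once $\mathrm{Re}\,\lambda$ is sufficiently negative, in view of the lower bound $\sigma(H)\subset[-\|V^-\|_{L^\infty(\Omega)},+\infty)$), and it is injective on $H^1_0(\Omega)$ precisely because $\lambda\notin\sigma(H)$. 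The Fredholm alternative then produces a unique $w$, hence a unique $u\in H^1(\Omega)$; the equation itself forces $(-{\rm i}\nabla+A)^2u=\lambda u-Vu\in L^2(\Omega)$, supplying the regularity needed afterwards.

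The core of the proof is \eqref{eq:Sol-Series}. Since $\{\phi_k\}$ is a Hilbertian basis of $L^2(\Omega)$, it is enough to identify the coefficients $\langle u,\phi_k\rangle$. Testing the weak form of \eqref{eq1} against $v=\phi_k\in H^1_0(\Omega)\cap D(H)$ gives $b(u,\phi_k)=\lambda\langle u,\phi_k\rangle$. On the other hand, integrating by parts so as to transfer the magnetic gradient onto $\phi_k$ and using that $\phi_k$ vanishes on $\Gamma$ (so that the conormal term reduces to $(-{\rm i}\nabla+A)\phi_k\cdot\nu=-{\rm i}\partial_\nu\phi_k$ on $\Gamma$, the magnetic contribution $(A\cdot\nu)\phi_k$ dropping out) yields the Green-type identity $b(u,\phi_k)=\langle u,H\phi_k\rangle+\langle f,h_k\rangle=\lambda_k\langle u,\phi_k\rangle+\alpha_k$. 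Comparing the two evaluations gives $(\lambda-\lambda_k)\langle u,\phi_k\rangle=\alpha_k$, hence $\langle u,\phi_k\rangle=\alpha_k/(\lambda-\lambda_k)$, which is \eqref{eq:Sol-Series}. Parseval's identity then immediately gives $\|u_\lambda\|_{L^2(\Omega)}^2=\sum_{k\geq1}|\alpha_k|^2/|\lambda-\lambda_k|^2$, a convergent series because $u_\lambda\in L^2(\Omega)$.

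For the decay as $\lambda\to-\infty$ along the reals, I would fix a real $\lambda_0<\inf\sigma(H)$, for which the previous step already guarantees $\sum_{k\geq1}|\alpha_k|^2/|\lambda_0-\lambda_k|^2<\infty$. For real $\lambda<\lambda_0$ one writes each term as $|\alpha_k|^2/|\lambda-\lambda_k|^2=\big(|\alpha_k|^2/|\lambda_0-\lambda_k|^2\big)\big((\lambda_k-\lambda_0)/(\lambda_k-\lambda)\big)^2$, where the ratio lies in $(0,1)$ and tends to $0$ as $\lambda\to-\infty$ for each fixed $k$. Since the terms are dominated by the summable sequence $|\alpha_k|^2/|\lambda_0-\lambda_k|^2$, dominated convergence for series yields $\sum_{k\geq1}|\alpha_k|^2/|\lambda-\lambda_k|^2\to0$.

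The step I expect to be most delicate is the Green-type identity in the second paragraph: it must be justified when $u$ is only known to satisfy $u\in H^1(\Omega)$ with $(-{\rm i}\nabla+A)^2u\in L^2(\Omega)$, rather than $u\in H^2(\Omega)$. I would handle this either by approximating $u$ in the graph norm or by interpreting the conormal derivative weakly, relying on the embedding $D(H)\hookrightarrow H^2(\Omega)$ recalled after \eqref{eq:Def-A-theta}, which guarantees $h_k={\partial_\nu\phi_k}_{|\Gamma}\in H^{1/2}(\Gamma)$ and makes $\alpha_k=\langle f,h_k\rangle$ well defined.
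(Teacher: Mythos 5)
Your proposal is correct and follows essentially the same route as the paper: expansion of $u_\lambda$ in the eigenbasis, a Green-type identity (justified via $\nabla u_\lambda\in H_{div}(\Omega)$ in the paper, which is equivalent to your weak interpretation of the conormal derivative) to obtain $(\lambda-\lambda_k)(u|\phi_k)=\alpha_k$, and dominated convergence against the summable sequence $|\alpha_k|^2/|\lambda_0-\lambda_k|^2$ for the decay. The only difference is that you spell out the existence–uniqueness step via a lift and the Fredholm alternative, which the paper simply asserts.
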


\begin{proof}
Since $\lambda\notin\sigma(H)$, one can easily check that \eqref{eq1} admits a unique solution $u_\lambda \in H^1(\Omega)$. Moreover, $u_\lambda$ can be written in terms of the eigenvalues and eigenfunctions $\lambda_{k},\phi_{k}$. Indeed,  $u_\lambda \in L^2(\Omega)$ can be expressed in the Hilbert basis $(\phi_{k})_{k\geq 1}$ as
$$u_\lambda = \sum_{k\geq 1} (u_\lambda|\phi_{k})\, \phi_{k}\, $$
with $(\cdot,\cdot)$ the scalar product with respect to $L^2(\Omega)$. Since $u_\lambda\in H^1(\Omega)$ and $\Delta u_\lambda=-2iA\cdot\nabla u_\lambda+(-{\rm i}\ div(A) +|A|^2+V)u_\lambda\in L^2(\Omega)$, we have  $\nabla u_\lambda\in  H_{div}(\Omega)=\{v\in L^2(\Omega;\mathbb C^n):\ div(v)\in L^2(\Omega)\}$.
Thus, taking the scalar product of the first equation in \eqref{eq1} with $\phi_{k}$ and applying the Green formula  we obtain
$$\left\langle f,h_k\right\rangle = (\lambda - \lambda_{k})\,(u|\phi_{k}),$$
which yields the expression given by \eqref{eq:Sol-Series}.
The fact that $\|u_{\lambda}\| \to 0$ as $\lambda \to -\infty$ is a consequence of the fact that we may fix $c_{0} > \norm{V}_{L^\infty(\Omega)}$ large enough so that if $\lambda$ is real and such that $\lambda \leq - c_{0}$, we have $|\lambda - \lambda_{k}|^2 \geq  |c_0-\lambda_{k}|^2$ for all $k \geq 1$, and thus
$${|\alpha_{k}|^2 \over |\lambda - \lambda_{k}|^2} \leq  
{|\alpha_{k}|^2 \over  |c_0-\lambda_{k}|^2 } \, ,$$
so that we may apply Lebesgue's dominated convergence as $\lambda \to -\infty$.\end{proof}
\begin{lemma}\label{lem:Resolution:grad}
For  all $\lambda<-\norm{V}_{L^\infty(\Omega)}-6\norm{A}_{L^\infty(\Omega,\R^n)}^2$,   the solution $u_\lambda$ of \eqref{eq1} satisfies
\bel{lol1} \norm{\nabla u_\lambda}_{L^2(\Omega\setminus \Omega_1)}\leq C\norm{u_\lambda}_{L^2(\Omega_1)}\ee
with $C$ depending only on $\Omega$ and $\Omega_1$.

\end{lemma}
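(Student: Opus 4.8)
The plan is to establish \eqref{lol1} as a Caccioppoli-type energy estimate, localized away from $\Gamma$ by a cutoff. Since $\Omega_1$ is a neighborhood of $\Gamma$ in $\Omega$, the set $\Omega\setminus\Omega_1$ stays at positive distance from $\Gamma$, so I can fix $\chi\in\mathcal C^\infty_0(\Omega)$ with $0\leq\chi\leq1$, $\chi\equiv1$ on $\Omega\setminus\Omega_1$ and $\chi\equiv0$ in a neighborhood of $\Gamma$. By construction $\nabla\chi$ is supported in $\Omega_1$ and $\norm{\nabla\chi}_{L^\infty}$ depends only on $\Omega$ and $\Omega_1$. The key structural point is that $v:=\chi^2u_\lambda$ is compactly supported in $\Omega$, hence $v\in H^1_0(\Omega)$, so it is an admissible test function and all integrations by parts are free of boundary contributions (here one uses $\nabla u_\lambda\in H_{div}(\Omega)$, as established in Lemma~\ref{lem:Resolution}).

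Inserting $v=\chi^2u_\lambda$ into the weak form of \eqref{eq1} and using the Leibniz rule $(-{\rm i}\nabla+A)(\chi^2u_\lambda)=\chi^2(-{\rm i}\nabla+A)u_\lambda-2{\rm i}\chi u_\lambda\nabla\chi$, I take real parts. Since $\lambda$ is real and negative, the cross term produces an imaginary-part contribution and I obtain the identity
\[\int_\Omega\chi^2\abs{(-{\rm i}\nabla+A)u_\lambda}^2\,dx+\abs{\lambda}\int_\Omega\chi^2\abs{u_\lambda}^2\,dx=2\,\mathfrak{I}\!\left(\int_\Omega\chi\,\overline{u_\lambda}\,\big((-{\rm i}\nabla+A)u_\lambda\cdot\nabla\chi\big)\,dx\right)-\int_\Omega V\chi^2\abs{u_\lambda}^2\,dx.\]
Young's inequality in the form $2\chi\abs{u_\lambda}\abs{(-{\rm i}\nabla+A)u_\lambda}\abs{\nabla\chi}\leq\tfrac12\chi^2\abs{(-{\rm i}\nabla+A)u_\lambda}^2+2\abs{u_\lambda}^2\abs{\nabla\chi}^2$ lets me absorb half of $\int_\Omega\chi^2\abs{(-{\rm i}\nabla+A)u_\lambda}^2$ into the left-hand side, while $-\int_\Omega V\chi^2\abs{u_\lambda}^2\leq\norm{V}_{L^\infty}\int_\Omega\chi^2\abs{u_\lambda}^2$ is absorbed into $\abs{\lambda}\int_\Omega\chi^2\abs{u_\lambda}^2$ because $\abs{\lambda}>\norm{V}_{L^\infty}$. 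As $\nabla\chi$ is supported in $\Omega_1$, this yields the two bounds
\[\int_\Omega\chi^2\abs{(-{\rm i}\nabla+A)u_\lambda}^2\,dx\leq C\norm{u_\lambda}_{L^2(\Omega_1)}^2,\qquad \big(\abs{\lambda}-\norm{V}_{L^\infty}\big)\int_\Omega\chi^2\abs{u_\lambda}^2\,dx\leq C\norm{u_\lambda}_{L^2(\Omega_1)}^2,\]
with $C$ depending only on $\norm{\nabla\chi}_{L^\infty}$, hence only on $\Omega$ and $\Omega_1$.

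Finally, to pass from the magnetic gradient to the ordinary one I use $\nabla u_\lambda={\rm i}\big((-{\rm i}\nabla+A)u_\lambda-Au_\lambda\big)$, so that on $\Omega\setminus\Omega_1$, where $\chi\equiv1$, one has $\abs{\nabla u_\lambda}^2\leq2\abs{(-{\rm i}\nabla+A)u_\lambda}^2+2\norm{A}_{L^\infty}^2\abs{u_\lambda}^2$. Combining with the two displayed bounds gives $\norm{\nabla u_\lambda}_{L^2(\Omega\setminus\Omega_1)}^2\leq C'\norm{u_\lambda}_{L^2(\Omega_1)}^2+\tfrac{2\norm{A}_{L^\infty}^2}{\abs{\lambda}-\norm{V}_{L^\infty}}\,C\,\norm{u_\lambda}_{L^2(\Omega_1)}^2$. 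This is exactly where the hypothesis $\lambda<-\norm{V}_{L^\infty}-6\norm{A}_{L^\infty}^2$ is used: it forces $\abs{\lambda}-\norm{V}_{L^\infty}>6\norm{A}_{L^\infty}^2$, so the coefficient $\tfrac{2\norm{A}_{L^\infty}^2}{\abs{\lambda}-\norm{V}_{L^\infty}}$ is bounded by $\tfrac13$ independently of $\lambda$, $A$ and $V$, and \eqref{lol1} follows with a constant depending only on $\Omega$ and $\Omega_1$. I expect the main obstacle to be precisely this last conversion, namely absorbing the magnetic zeroth-order term $\norm{A}_{L^\infty}^2\int\chi^2\abs{u_\lambda}^2$ while keeping the final constant free of $\lambda$ and of the coefficients, which is what dictates the threshold on $\lambda$; the remaining steps are the routine justification of the test-function computation and of the boundary-term-free integration by parts.
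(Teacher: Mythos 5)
Your proof is correct and follows essentially the same route as the paper: a cutoff $\chi$ equal to $1$ on $\Omega\setminus\Omega_1$, testing the equation against $\chi^2\overline{u_\lambda}$, and absorbing the cross terms by Young's inequality, with the threshold $\lambda<-\norm{V}_{L^\infty(\Omega)}-6\norm{A}_{L^\infty(\Omega,\R^n)}^2$ guaranteeing that the zeroth-order terms can be absorbed. The only difference is bookkeeping: you keep the magnetic gradient $(-{\rm i}\nabla+A)u_\lambda$ and convert to $\nabla u_\lambda$ at the end (which is where you spend the $6\norm{A}^2$ margin), whereas the paper expands into ordinary gradients from the start and uses the margin to drop the $\norm{\chi u_\lambda}^2$ term; both yield the same conclusion with a constant depending only on $\Omega$ and $\Omega_1$.
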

\begin{proof}
 Let us denote by $\chi\in\mathcal C^\infty_0(\Omega,\R)$ a function satisfying $\chi=1$ on $\Omega\setminus \Omega_1$. Then, since $\nabla u_\lambda\in  H_{div}(\Omega)$, multiplying \eqref{eq1} by $\chi^2\overline{u_\lambda}$ and applying the Green formula we obtain
\bel{in}\begin{aligned}0&=\int_\Omega (-{\rm i}\nabla +A)^2 u_\lambda \chi^2\overline{u_\lambda}dx+\int_\Omega(V-\lambda)\chi^2\abs{u_\lambda}^2dx\\
\ &= \int_\Omega |\chi\nabla u_\lambda|^2dx+2\int_\Omega (\chi\nabla u_\lambda)\cdot  \overline{\nabla\chi u_\lambda}dx\\
\ &\ \ +\int_\Omega [2{\rm i}(u_\lambda \chi)A\cdot \overline{u_\lambda\nabla\chi }+{\rm i}\chi u_\lambda A\cdot  \overline{\chi\nabla u_\lambda} +\chi\nabla u_\lambda\cdot \overline{{\rm i}A\chi u_\lambda}]dx+\int_\Omega(|A|^2+V-\lambda)\chi^2\abs{u_\lambda}^2dx.\end{aligned}\ee
Applying the Cauchy-Schwarz inequality we find
\[\begin{array}{l} \norm{\chi\nabla u_\lambda}_{L^2(\Omega)}^2+(-\norm{A}_{L^\infty(\Omega,\R^n)}^2-\norm{V}_{L^\infty(\Omega)}-\lambda)\norm{\chi u_\lambda}^2_{L^2(\Omega)}\\
\ \\
\leq 2\norm{u_\lambda\nabla \chi }_{L^2(\Omega)}\norm{\chi\nabla u_\lambda}_{L^2(\Omega)}+2\norm{A}_{L^\infty(\Omega)}\norm{\chi u_\lambda}_{L^2(\Omega)}\norm{u_\lambda\nabla \chi }_{L^2(\Omega)}+2\norm{A}_{L^\infty(\Omega)}\norm{\chi u_\lambda}_{L^2(\Omega)}\norm{\chi\nabla u_\lambda}_{L^2(\Omega)}\\
\ \\
\leq 4\norm{u_\lambda\nabla \chi }^2_{L^2(\Omega)}+{\norm{\chi\nabla u_\lambda}_{L^2(\Omega)}^2\over 4} +\norm{A}_{L^\infty(\Omega,\R^n)}^2\norm{\chi u_\lambda}_{L^2(\Omega)}^2+\norm{u_\lambda\nabla \chi }_{L^2(\Omega)}^2+4\norm{A}_{L^\infty(\Omega,\R^n)}^2\norm{\chi u_\lambda}_{L^2(\Omega)}^2\\
\ \\
\ \ \ +{\norm{\chi\nabla  u_\lambda}_{L^2(\Omega)}^2\over 4}.\end{array}\]
From this estimate, we deduce
\[{\norm{\chi\nabla u_\lambda}_{L^2(\Omega)}^2\over 2}+(-\norm{V}_{L^\infty(\Omega)}-6\norm{A}_{L^\infty(\Omega,\R^n)}^2-\lambda)\norm{\chi u_\lambda}^2_{L^2(\Omega)}\leq 5\norm{u_\lambda\nabla\chi }^2_{L^2(\Omega)}.\]
Using the fact that $\lambda<-\norm{V}_{L^\infty(\Omega)}-6\norm{A}_{L^\infty(\Omega,\R^n)}^2$, we obtain
\[\norm{\nabla u_\lambda}_{L^2(\Omega\setminus \Omega_1)}^2\leq\norm{\chi\nabla u_\lambda}_{L^2(\Omega)}^2\leq 10\norm{u_\lambda\nabla\chi }^2_{L^2(\Omega)}\leq 10\norm{\nabla\chi}_{L^\infty(\Omega)}^2\norm{ u_\lambda}^2_{L^2( \Omega_1)}\leq C\norm{ u_\lambda}^2_{L^2( \Omega_1)}.\]
From this estimate we deduce \eqref{lol1}.

\end{proof}

It is clear that the series \eqref{eq:Sol-Series} giving $u_{\lambda}$ in terms of $\alpha_{k},\lambda_{k}$ and $\phi_{k}$, converges only in $L^2(\Omega)$ and thus we cannot deduce an expression of the normal derivative $\partial_\nu u_{\lambda}$ in terms of $\alpha_{k}, \lambda_{k}$ and $h_{k}$. To avoid this difficulty, in a similar way to \cite{KKS}, we have the following lemma:

\begin{lemma}\label{lem:v-lambda-mu}
Let $f \in H^{1/2}(\Gamma)$ be fixed and for $\lambda,\mu \in {\Bbb C} \setminus \sigma (H)$ let $u_{\lambda}$ and $u_{\mu}$ be the solutions given by Lemma \ref{lem:Resolution}. If we set $v := v_{\lambda,\mu} := u_{\lambda} - u_{\mu}$, then
\begin{equation}\label{eq:v-Normal-Deriv}
\partial_\nu v  = \sum_{k\geq1} 
{(\mu - \lambda)\alpha_{k} \over (\lambda - \lambda_{k})(\mu - \lambda_{k})}\, h_{k}\, ,
\end{equation}
the convergence taking place in $H^{1/2}(\Gamma)$.
\end{lemma}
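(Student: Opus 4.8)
The plan is to exploit that, unlike $u_\lambda$ and $u_\mu$ taken separately, their difference $v=u_\lambda-u_\mu$ has vanishing boundary trace and therefore lies in the domain $D(H)$, where the normal-derivative trace is well behaved. First I would note that $v_{|\Gamma}=f-f=0$, so $v\in H^1_0(\Omega)$, and that subtracting the two instances of \eqref{eq1} satisfied by $u_\lambda$ and $u_\mu$ yields
\[(-{\rm i}\nabla+A)^2v+Vv-\lambda v=(\lambda-\mu)u_\mu.\]
Since $u_\mu$ and $v$ lie in $L^2(\Omega)$ by Lemma \ref{lem:Resolution}, the right-hand side is in $L^2(\Omega)$, whence $(-{\rm i}\nabla+A)^2v\in L^2(\Omega)$ and $v\in D(H)$. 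By the continuous embedding $D(H)\hookrightarrow H^2(\Omega)$ recalled above, $v\in H^2(\Omega)$ and $\partial_\nu v\in H^{1/2}(\Gamma)$ is a genuine function.

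Next I would expand $v$ in the Hilbertian basis $(\phi_k)_{k\geq1}$. Using $(u_\lambda|\phi_k)=\alpha_k/(\lambda-\lambda_k)$ from Lemma \ref{lem:Resolution} and its analogue for $\mu$, the Fourier coefficients are
\[c_k:=(v|\phi_k)=\alpha_k\left(\frac{1}{\lambda-\lambda_k}-\frac{1}{\mu-\lambda_k}\right)=\frac{(\mu-\lambda)\alpha_k}{(\lambda-\lambda_k)(\mu-\lambda_k)},\]
so that $v=\sum_{k\geq1}c_k\phi_k$ in $L^2(\Omega)$. The crucial point is to upgrade this to convergence in the graph norm of $H$. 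Setting $v_N:=\sum_{k=1}^Nc_k\phi_k\in D(H)$, one has
\[\norm{v-v_N}_{L^2(\Omega)}^2=\sum_{k>N}\abs{c_k}^2,\qquad \norm{H(v-v_N)}_{L^2(\Omega)}^2=\sum_{k>N}\lambda_k^2\abs{c_k}^2,\]
and both tails vanish as $N\to+\infty$: the first because $v\in L^2(\Omega)$, the second because $Hv=\sum_{k\geq1}\lambda_kc_k\phi_k$ belongs to $L^2(\Omega)$, which is exactly the membership $v\in D(H)$ established in the first step.

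Hence $v_N\to v$ in the graph norm of $H$, and thus in $H^2(\Omega)$ by the embedding $D(H)\hookrightarrow H^2(\Omega)$. Since the normal-derivative trace $w\mapsto\partial_\nu w_{|\Gamma}$ is continuous from $H^2(\Omega)$ into $H^{1/2}(\Gamma)$, I would pass to the limit in $\partial_\nu v_N=\sum_{k=1}^Nc_kh_k$ to obtain \eqref{eq:v-Normal-Deriv} with convergence in $H^{1/2}(\Gamma)$. The one delicate point is precisely this upgrade from $L^2$ to $H^2$ convergence; it rests on the second-order decay $\lambda_k^{-2}$ of the coefficients $c_k$ (as opposed to the mere $\lambda_k^{-1}$ decay of the coefficients of $u_\lambda$ alone, which is why the remark preceding the lemma forbids differentiating the series for $u_\lambda$ term by term), making $\sum_k\lambda_k^2\abs{c_k}^2$ summable — equivalently, $Hv\in L^2(\Omega)$, guaranteed by the vanishing boundary trace of $v$.
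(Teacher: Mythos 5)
Your proof is correct and follows essentially the same route as the paper's: identify $v$ as the solution of the homogeneous Dirichlet problem with right-hand side $(\lambda-\mu)u_\mu$, expand in the eigenbasis, observe convergence in $D(H)$, and apply the continuity of $w\mapsto\partial_\nu w$ from $D(H)$ into $H^{1/2}(\Gamma)$. You merely spell out in more detail the graph-norm convergence of the partial sums (via the summability of $\sum_k\lambda_k^2|c_k|^2$) and the factorization of the trace through the embedding $D(H)\hookrightarrow H^2(\Omega)$, which the paper leaves implicit.
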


\begin{proof} Let $v_{\lambda,\mu} := u_{\lambda} - u_{\mu}$; One verifies that $v_{\lambda,\mu}$ solves
\begin{equation}\label{eq:v-lambda}
\left\{ 
\begin{array}{rcll} 
(-{\rm i}\nabla+A)^2 v_{\lambda,\mu} + V v_{\lambda,\mu} - \lambda v_{\lambda,\mu} & = & (\lambda - \mu)u_{\mu}, & \mbox{in}\ \Omega ,\\ 
v_{\lambda,\mu}(x) & = & 0,& x\in \Gamma.
\end{array}\right.
\end{equation}
Since $(u_{\mu}|\phi_{k}) = \alpha_{k}/(\mu - \lambda_{k})$, it follows that
$$v_{\lambda,\mu} = \sum_{k\geq1}{(\lambda - \mu) \alpha_{k}\over (\lambda_{k} - \lambda)(\mu - \lambda_{k}) }\, \phi_{k},$$
the convergence taking place in $D(H)$. Since the operator $v \mapsto \partial_\nu v$ is continuous from $D(H)$ into $H^{1/2}(\Gamma)$, the result of the lemma follows.
\end{proof}

The next lemma states essentially that if for $j=1$ or $j=2$ we have two magnetic potentials $A_j$, two electric potentials $V_j$ and $u_j := u_{j,\mu}$ solutions of
\begin{equation}\label{eq:u-m}
\left\{ 
\begin{array}{rcll} 
(-{\rm i}\nabla+A_j)^2 u_j + V_ju_j -\mu u_j & = & 0, & \mbox{in}\ \Omega ,\\ 
u_j(x) & = & f(x),& x\in \Gamma,
\end{array}\right.
\end{equation}
then $u_{1,\mu}$ and $u_{2,\mu}$ are {\it close\/} as $\mu \to -\infty$: in some sense the influence of the potentials $A_j$ and $V_{j}$ are dimmed when $\mu \to -\infty$. More precisely we have:

\begin{lemma}\label{lem:z-mu} 
Let $V_{j} \in L^\infty(\Omega,{\Bbb R})$ and $A_j\in W^{1,\infty}(\Omega,\R^n)$ be given for $j=1$ or $j=2$, and denote by $H_j$ the corresponding operator defined by \eqref{eq:Def-A-theta}. We assume that condition \eqref{t1a} is fulfilled.
For $f \in H^{1/2}(\Gamma)$ and  $\mu\in(-\infty, \mu_*)\subset {\Bbb C} \setminus \sigma (H) $, let $u_{j,\mu} := u_{j}$ be the solution of \eqref{eq:u-m}. Then  $z_{\mu} := u_{1,\mu} - u_{2,\mu}\in H^2(\Omega)$  converge to $0$ in $H^{2}(\Omega)$ as $\mu\to-\infty$. In particular $\partial_\nu z_{\mu} \to 0$ in $L^2(\Gamma)$ as $\mu \to -\infty$.
\end{lemma}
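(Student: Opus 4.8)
The plan is to realize $z_\mu:=u_{1,\mu}-u_{2,\mu}$ as the solution of a homogeneous Dirichlet problem whose source is small in $L^2(\Omega)$, and then to upgrade this smallness to $H^2(\Omega)$ convergence by resolvent and elliptic estimates. First I would derive the equation for $z_\mu$: since $(H_1-\mu)u_{1,\mu}=0$, $(H_2-\mu)u_{2,\mu}=0$ and $u_{1,\mu}=u_{2,\mu}=f$ on $\Gamma$, writing $(H_1-\mu)u_{2,\mu}=(H_1-H_2)u_{2,\mu}$ shows that $z_\mu$ solves
\[(-{\rm i}\nabla+A_1)^2 z_\mu+(V_1-\mu)z_\mu=-g_\mu\ \text{ in }\Omega,\qquad z_\mu=0\ \text{ on }\Gamma,\qquad g_\mu:=(H_1-H_2)u_{2,\mu}.\]
Expanding the two magnetic Laplacians, $g_\mu$ is the $L^2(\Omega)$ function
\[-{\rm i}\,\mathrm{div}(A_1-A_2)\,u_{2,\mu}-2{\rm i}(A_1-A_2)\cdot\nabla u_{2,\mu}+(\abs{A_1}^2-\abs{A_2}^2)u_{2,\mu}+(V_1-V_2)u_{2,\mu}.\]
The structural point, and the place where hypothesis \eqref{t1a} is used, is that the three terms carrying $A_1-A_2$ are supported in $\overline{\Omega\setminus\Omega_1}$, i.e. away from $\Gamma$.

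Next I would show $\norm{g_\mu}_{L^2(\Omega)}\to0$ as $\mu\to-\infty$. By Lemma \ref{lem:Resolution}, $\norm{u_{2,\mu}}_{L^2(\Omega)}\to0$, which disposes of the globally supported term $(V_1-V_2)u_{2,\mu}$ and of the zeroth order $A$-terms. The only genuinely new contribution is the first order term $(A_1-A_2)\cdot\nabla u_{2,\mu}$; since $A_1-A_2$ vanishes on $\Omega_1$, it is bounded by $\norm{A_1-A_2}_{L^\infty(\Omega)}\norm{\nabla u_{2,\mu}}_{L^2(\Omega\setminus\Omega_1)}$, and Lemma \ref{lem:Resolution:grad} estimates the latter by $C\norm{u_{2,\mu}}_{L^2(\Omega_1)}\to0$. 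I expect this step to be the main obstacle: the argument works only because the difference of magnetic potentials is supported exactly in the region where Lemma \ref{lem:Resolution:grad} provides gradient decay, whereas near $\Gamma$ the gradient of $u_{2,\mu}$ does not vanish, the boundary data $f$ being fixed.

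It remains to turn $\norm{g_\mu}_{L^2(\Omega)}\to0$ into $H^2$ convergence. As $H_1$ is selfadjoint with $\sigma(H_1)\subset[-\norm{V_1^-}_{L^\infty(\Omega)},+\infty)$, the resolvent bound yields $\norm{z_\mu}_{L^2(\Omega)}\leq\norm{g_\mu}_{L^2(\Omega)}/(\lambda_{1,1}-\mu)$, so that both $\norm{z_\mu}_{L^2(\Omega)}\to0$ and, crucially, $\norm{\mu z_\mu}_{L^2(\Omega)}\leq\tfrac{\abs{\mu}}{\abs{\mu}-\norm{V_1^-}_{L^\infty(\Omega)}}\norm{g_\mu}_{L^2(\Omega)}\to0$. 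Pairing the equation with $z_\mu$ and using the quadratic form associated with $H_1$ gives $\norm{(-{\rm i}\nabla+A_1)z_\mu}_{L^2(\Omega)}^2\leq\norm{g_\mu}_{L^2(\Omega)}\norm{z_\mu}_{L^2(\Omega)}+\norm{V_1}_{L^\infty(\Omega)}\norm{z_\mu}_{L^2(\Omega)}^2\to0$, whence $\norm{z_\mu}_{H^1(\Omega)}\to0$. Finally I would rewrite the equation as $-\Delta z_\mu=\mu z_\mu-V_1z_\mu+{\rm i}(\mathrm{div}A_1)z_\mu+2{\rm i}A_1\cdot\nabla z_\mu-\abs{A_1}^2z_\mu-g_\mu$, observe that every term on the right tends to $0$ in $L^2(\Omega)$ (the term $\mu z_\mu$ precisely thanks to the quantitative $1/\abs{\mu}$ decay above), and invoke the $H^2$ a priori estimate for the Dirichlet Laplacian on the $\mathcal C^{1,1}$ domain $\Omega$ from \cite[Theorem 2.2.2.3]{Gr} to conclude $\norm{z_\mu}_{H^2(\Omega)}\leq C(\norm{\Delta z_\mu}_{L^2(\Omega)}+\norm{z_\mu}_{L^2(\Omega)})\to0$. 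Continuity of the normal trace $\partial_\nu:H^2(\Omega)\to L^2(\Gamma)$ then gives $\partial_\nu z_\mu\to0$ in $L^2(\Gamma)$.
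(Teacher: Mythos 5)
Your argument is correct and follows the paper's proof in all essentials: you derive the same equation for $z_\mu$, use \eqref{t1a} to confine the terms carrying $A_1-A_2$ to $\Omega\setminus\Omega_1$ where Lemma \ref{lem:Resolution:grad} controls $\nabla u_{2,\mu}$, and combine this with Lemma \ref{lem:Resolution} to show the source tends to $0$ in $L^2(\Omega)$. The only (cosmetic) difference is the final step: the paper passes from $\norm{w_\mu}_{L^2(\Omega)}\to 0$ to $\norm{z_\mu}_{H^2(\Omega)}\to 0$ directly via the continuous embedding $D(H_1)\hookrightarrow H^2(\Omega)$ and the spectral decomposition of the resolvent, whereas you unpack that same embedding into an explicit $L^2\to H^1\to H^2$ bootstrap (resolvent bound, quadratic form, then Grisvard's elliptic estimate), which rests on the identical regularity fact.
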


\begin{proof}
Since the trace map $v\mapsto \partial_\nu v$ is  continuous from $H^{2}(\Omega)$ to $L^2(\Gamma)$, it is enough to show that $z_\mu\in H^2(\Omega)$ and  $\|z_{\mu}\|_{H^{2}(\Omega)} \to 0$ when $\mu \to -\infty$.  We fix $\mu<\mu_*$ with $\mu_*<-\norm{V}_{L^\infty(\Omega)}-6\norm{A}_{L^\infty(\Omega,\R^n)}^2$ less than  the constants given by Lemma \ref{lem:Resolution}  for $A=A_j$, $V=V_j$, $j=1,2$. Without lost of generality we assume that $H_j-\mu_*$ is  positive, $j=1,2$.
One verifies that $z_{\mu}$ solves the equation
\begin{equation}\label{eq:z-mu}
\left\{ 
\begin{array}{rcll} 
(-{\rm i}\nabla+A_1)^2 z_{\mu} + V_1z_{\mu} - \mu z_{\mu} & = & -2{\rm i}(A_2-A_1)\cdot\nabla u_{2,\mu}+(p_2-p_1)u_{2,\mu}, & \mbox{in}\ \Omega,\\ 
z_{\mu}(x) & = & 0,& x\in \Gamma
\end{array}\right.
\end{equation}
with $p_j=-{\rm i}div(A_j)+\abs{A_j}^2+V_j$, $j=1,2$.
That is, denoting by $R_{1,\mu} = (H_1 - \mu I)^{-1}$ the resolvent of the operator $H_1:= (-{\rm i}\nabla+A_1)^2  + V_1$, we have $$z_{\mu} = R_{1,\mu}(-2{\rm i}(A_2-A_1)\cdot\nabla u_{2,\mu}+(p_2-p_1)u_{2,\mu})=\sum_{k=1}^{+\infty}{(w_\mu,\phi_{1,k})\over (\lambda_{1,k}-\mu)}\phi_{1,k}$$
with $w_\mu=-2{\rm i}(A_2-A_1)\cdot\nabla u_{2,\mu}+(p_2-p_1)u_{2,\mu}$ and $(\lambda_{1,k})_{k\geq1}$, $(\phi_{1,k})_{k\geq1}$ respectively the eigenvalues of $H_1$ and an Hilbertian basis of eigenfunctions associated to these eigenvalues. Since $w_\mu\in L^2(\Omega)$, $z_\mu$ is lying in $D(H_1)$ and by the same way in $H^2(\Omega)$. It remains to show that  $\|z_{\mu}\|_{H^{2}(\Omega)} \to 0$ when $\mu \to -\infty$.  Since $D(H_1)$ embedded continuously into $H^2(\Omega)$ there exists a generic constant $C$ depending on $A_1$, $V_1$ and $\Omega$ such that
\[\norm{z_\mu}_{H^2(\Omega)}^2\leq C\sum_{k=1}^{\infty}\abs{\lambda_{1,k}-\mu_*}^2\abs{(z_\mu,\phi_{1,k})}^2\leq C\norm{w_\mu}_{L^2(\Omega)}.\]
On the other hand, condition \eqref{t1a} implies
\[\norm{w_\mu}_{L^2(\Omega)}\leq C(\norm{\nabla u_{2,\mu}}_{L^2(\Omega\setminus\Omega_1)}+\norm{ u_{2,\mu}}_{L^2(\Omega)})\]
with $C$ independent of $\mu$. Then, according to Lemma \ref{lem:Resolution} and \eqref{lol1}, we obtain
\[\limsup_{\mu\to-\infty}\norm{w_\mu}_{L^2(\Omega)}\leq C\limsup_{\mu\to-\infty}\norm{ u_{2,\mu}}_{L^2(\Omega)}=0.\]
Thus, we have
$$\limsup_{\mu\to-\infty}\norm{z_\mu}_{L^2(\Gamma)}\leq C \limsup_{\mu\to-\infty}\|z_{\mu}\|_{H^{2}(\Omega)}\leq C\limsup_{\mu\to-\infty}\norm{w_\mu}_{L^2(\Omega)}=0.$$
This completes the proof.
\end{proof}

Armed with these results, we will prove Theorem \ref{thm-1}  by using some asymptotic properties of solutions of \eqref{eq1} with respect to $\lambda$. For this purpose, like in \cite{CS,I,KKS} we use representation formulas that will allow  us to make a connection between the boundary spectral data and the potentials $A$ and $V$.

\section{Representation formulas}
 From now on, for all $x=(x_1,\ldots,x_n)\in\C^n$ and $y=(y_1,\ldots,y_n)\in\C^n$, we denote by $x\cdot y$ the quantity 
\[x\cdot y=\sum_{k=1}^nx_ky_k\]
and for all $x\in\R^n$ we denote by $x^\bot$ the subspace of $\R^n$ defined by $\{y\in\R^n:\ y\cdot x=0\}$. Moreover, we set $A_j\in\mathcal C^1(\overline{\Omega},\R^n)$, $V_j\in L^\infty(\Omega,\R)$, $j=1,2$, and we assume that condition \eqref{t1a} is fulfilled. For $j=1,2$ and $\lambda\in \mathbb C\setminus \R$, we associate to the problem
\bel{eq:lambda}
\left\{ 
\begin{array}{rcll} 
(-{\rm i}\nabla+A_j)^2 u_j + V_ju_j -\lambda u_j & = & 0, & \mbox{in}\ \Omega ,\\ 
u_j(x) & = & f(x),& x\in \Gamma
\end{array}\right.\ee
the Dirichlet-to-Neumann map
\[\Lambda_{j,\lambda}:H^{{1\over 2}}(\partial\Omega)\ni f\mapsto{(\partial_\nu +{\rm i}A_j\cdot\nu)u_{j,\lambda}}_{|\Gamma},\]
where $u_{j,\lambda}$ solves \eqref{eq:lambda}.
The goal of this section is to apply  the Dirichlet-to-Neumann maps $\Lambda_{j,\lambda}$ to some suitable ansatzs associated with \eqref{eq:lambda} in order to get two representation formulas involving the magnetic potentials $A_j$ and the electric potentials $V_j$, $j=1,2$.  A similar approach was  developed by \cite{I} and \cite{CS,KKS} used the representation of \cite{I}. The idea is to establish the link between the electric and magnetic potentials and the boundary spectral data by mean of an expression involving  the Dirichlet-to-Neumann maps $\Lambda_{1,\lambda}$, $\Lambda_{2,\lambda}$.
We start with two general representation formulas, stated in the next subsection,  where some properties of the ansatzs will not be completely specified. This will allow us to clarify the main goal of these formulas. Then, in Subsection 3.2 we will introduce the remaining properties of our ansatzs and establish some asymptotic properties from our representations which will be one of the main points of our analysis. 

\subsection{General representation formulas}

In this subsection we introduce the first formulation of two representation formulas involving respectively the Dirichlet-to-Neumann maps $\Lambda_{1,\lambda}$, $\Lambda_{2,\lambda}$ and some  ansatzs associated with problem \eqref{eq:lambda}. In \cite{I}, Isozaki considered such  formulas for Schr\"odinger operators $-\Delta+V$ with  an electric potential $V$, in other words for Schr\"odinger operators with a  variable coefficient of order zero. In our case we need to extend this strategy to Schr\"odinger operators with
both magnetic and electric potentials, which means an extension to Schr\"odinger  operators with variable coefficients of order zero and one. In addition, we need to consider ansatzs that allow to recover both the magnetic field and the electric potential. Therefore, we consider some ansatzs, associated with  \eqref{eq:lambda}, of the form
\bel{anz}\Phi_{j,\lambda}(x)=e^{\zeta_j\cdot x}g_j(x),\quad \zeta_j\in\mathbb C^n,\ x\in\Omega,\ j=1,2\ee
with $\zeta_j$  satisfying $\zeta_j\cdot\zeta_j=-\lambda$ and with $g_1$ and $g_2$ respectively a solution of
\bel{eqeq}\zeta_1\cdot\nabla g_1+({\rm i}\zeta_1\cdot A_{1,\sharp})g_1=0,\quad \zeta_2\cdot\nabla g_2-({\rm i}\zeta_2\cdot A_{2,\sharp})g_2=0\ee
with $A_{j,\sharp}$ some smooth function close to the magnetic potential  $A_j$, $j=1,2$. 
More precisely, we fix  $\eta_1,\eta_2 \in \mathbb S^{n-1}=\{y \in \R^n,\ \abs{y} = 1 \}$ and we define $A_{j,\sharp}\in \mathcal C_0^\infty(\R^n,\R^n)$, $j=1,2$, some smooth approximations on $\overline{\Omega}$ of $A_j$. Then, we set
$\zeta_1={\rm i}\sqrt{\lambda}\eta_1$, $\zeta_2=-{\rm i}\sqrt{\lambda}\eta_2$ and we consider solutions of the transport equations \eqref{eqeq} given by
$$g_1(x):=e^{{\rm i}\psi_1(x)},\quad   g_2(x):=b_2(x)e^{-{\rm i}\psi_2(x)},\quad \psi_j(x):=-\int_{-\infty}^0 \eta_j\cdot A_{j,\sharp}(x+s\eta_j)ds,\quad  \eta_2\cdot\nabla b_2(x)=0,\quad x\in\R^n.$$
Therefore, we consider ansatzs associated with \eqref{eq:lambda} taking the form
\bel{jt1}
\Phi_{1,\lambda}(x):=e^{{\rm i}  \sqrt{\lambda} \eta_1 \cdot x}e^{{\rm i}\psi_1(x)},\quad \Phi_{2,\lambda}(x):=e^{-{\rm i} \sqrt{\lambda} \eta_2 \cdot x}b_2(x)e^{-{\rm i}\psi_2(x)},\quad x\in\Omega.
\ee
We assume in addition  that  $b_2\in W^{2,\infty}(\R^n)$ and we recall that $\psi_j$ solves the equation
\[\eta_j\cdot\nabla\psi_j(x)=-\eta_j\cdot A_{j,\sharp}, \ \ j=1,2,\quad x\in\R^n.\]
For the time being, we consider general ansatzs of the form \eqref{jt1} with the properties describe above. Additional information about the parameter $\lambda$, the function $A_{j,\sharp}$, the vector $\eta_j$, $j=1,2$,  and the function $b_2$ will be given in Subsection 3.2. In a similar  way to \cite{FKSU,KLU,KU,NSU1,Sa1,Su}, in the construction of our ansatzs  we consider some smooth approximations of the magnetic potentials instead of the magnetic potentials to obtain sufficiently smooth functions $\Phi_{j,\lambda}$, $j=1,2$. Using this approach, we can  weaken the regularity  assumption imposed on admissible magnetic potential from $W^{3,\infty}(\Omega)$ to $\mathcal C^1(\overline{\Omega})$.
Further, for $j=1,2$, we put
\bel{kt1}
S_j(\lambda,\eta_1,\eta_2)=\left \langle \Lambda_{j,\lambda} \Phi_{1,\lambda} , \overline{\Phi_{2,\lambda}}\right\rangle=\int_\Gamma(\Lambda_{j,\lambda} \Phi_{1,\lambda})\Phi_{2,\lambda}(x) d\sigma(x).
\ee
In other words, we apply  $\Lambda_{j,\lambda}$, $j=1,2$,  to ansatzs of the form \eqref{anz} with $\zeta_1={\rm i} \sqrt{\lambda} \eta_1 $, $\zeta_2=-{\rm i}  \sqrt{\lambda}\eta_2$, $g_1=e^{{\rm i} \psi_1}$ and $g_2=b_2e^{-{\rm i} \psi_2}$. We recall that  quantities similar to $S_1$ and $S_2$ have also been used by \cite{FKSU,I,KKS,KLU,KU,NSU1,Sa1,Su}. Let us also mention that, like in \cite{I,KKS}, the ansatzs  \eqref{jt1} do not depend on the potential $V_1$ and $V_2$ which are coefficients of order zero of the equation \eqref{eq:lambda}. On the other hand, the ansatzs  \eqref{jt1} depend on the magnetic potentials $A_1$ and $A_2$ which are coefficients of order one of the equation \eqref{eq:lambda}. By modifying the construction of 
\cite{I,KKS} with the new expression $g_j$, $j=1,2$, we will extend the approach of \cite{I,KKS} to Shr\"odinger operators with magnetic potentials. 
From now on, for the sake of simplicity we will systematically omit the subscripts $\lambda$ in $\Phi_{j,\lambda}$, $j=1,2$,  in the remaining of this text. In view of determining the behavior of $S_1-S_2$,  as $\im\lambda\to+\infty$, we introduce the following representations associated with  $S_1$ and $S_2$.

\begin{proposition}
\label{l1} 
For all  $\lambda \in \C\setminus\R$ and $\eta_j \in\mathbb S^{n-1}$, $j=1,2$, the scalar products $S_j(\lambda,\eta_1,\eta_2)$ have the following expression 
\begin{eqnarray}\label{l1a}
&&S_1(\lambda,\eta_1,\eta_2)\cr
&&= 2\sqrt{\lambda}\int_\Omega \eta_2\cdot (A_1-A_{2,\sharp})e^{{\rm i}\sqrt{\lambda}({\eta}_1-{\eta}_2)\cdot x} b_2 e^{{\rm i}(\psi_1(x)-\psi_2(x))}dx\cr
&&\ +\int_\Omega (V_1-q_{12})e^{{\rm i}\sqrt{\lambda}({\eta}_1-{\eta}_2)\cdot x} b_2 e^{{\rm i}(\psi_1(x)-\psi_2(x))}dx\cr
&&\ -{\rm i} \int_{\Gamma}   e^{{\rm i}\sqrt{\lambda}({\eta}_1-{\eta}_2)\cdot x} e^{{\rm i}(\psi_1(x)-\psi_2(x))}(b_2\sqrt{\lambda} \eta_2+b_2\nabla\psi_2+{\rm i}\nabla b_2 +b_2A_1)\cdot \nu d\sigma(x)\cr
&&\ -\int_\Omega \left[(H_1-\lambda)^{-1}\left(2\sqrt{\lambda}\eta_1\cdot(A_1-A_{1,\sharp})+q_{11}\right)\Phi_1\right]\left(2\sqrt{\lambda}\eta_2\cdot (A_1-A_{2,\sharp})b_2+V_1b_2-q_{12}\right)e^{-{\rm i}\sqrt{\lambda}{\eta}_2 \cdot x}e^{-{\rm i}\psi_2}dx,\cr &&\end{eqnarray}

\begin{eqnarray}\label{l1b}
&&S_2(\lambda,\eta_1,\eta_2) \cr
&&=\int_\Omega \left[2\sqrt{\lambda}\eta_2\cdot (A_2-A_{2,\sharp})+V_2-q_{22}\right]e^{{\rm i}\sqrt{\lambda}({\eta}_1-{\eta}_2)\cdot x} b_2 e^{{\rm i}(\psi_1(x)-\psi_2(x))}dx\cr
&&\ -{\rm i} \int_{\partial\Omega}   e^{{\rm i}\sqrt{\lambda}({\eta}_1-{\eta}_2)\cdot x} e^{{\rm i}(\psi_1(x)-\psi_2(x))}(b_2\sqrt{\lambda} \eta_2+b_2\nabla\psi_2+{\rm i}\nabla b_2 +b_2A_2)\cdot \nu d\sigma(x)\cr
&&\ -\int_\Omega \left[(H_2-\lambda)^{-1}\left(2\sqrt{ \lambda}\eta_1\cdot(A_2-A_{1,\sharp})+q_{21}\right)\Phi_1\right](2\sqrt{\lambda}\eta_2\cdot (A_2-A_{2,\sharp})b_2+V_2b_2-q_{22})e^{-{\rm i}\sqrt{\lambda}{\eta}_2 \cdot x}e^{-{\rm i}\psi_2}dx.\cr &&\end{eqnarray}

Here we denote by $q_{11}$, $q_{12}$, $q_{21}$, $q_{22}$ the expressions
\[q_{11}=-{\rm i}div (A_1)+\abs{A_1}^2+V_1(x)+2 A_1\cdot\nabla\psi_1-{\rm i}\Delta \psi_1+\abs{\nabla\psi_1}^2,\]
\[q_{12}=\Delta b_2-2{\rm i}\nabla\psi_2\cdot\nabla b_2-2{\rm i}\nabla b_2\cdot A_1+\left(-{\rm i}\Delta\psi_2-\abs{\nabla\psi_2}^2-2\nabla\psi_2\cdot A_1-{\rm i}div(A_1)-\abs{A_1}^2\right)b_2,\]
\[q_{21}=-{\rm i}div (A_2)+\abs{A_2}^2+V_2(x)+2 A_2\cdot\nabla\psi_1-{\rm i}\Delta \psi_1+\abs{\nabla\psi_1}^2,\]
\[q_{22}=\Delta b_2-2{\rm i}\nabla\psi_2\cdot\nabla b_2-2{\rm i}\nabla b_2\cdot A_2+\left(-{\rm i}\Delta\psi_2-\abs{\nabla\psi_2}^2-2\nabla\psi_2\cdot A_2-{\rm i}div(A_2)-\abs{A_2}^2\right)b_2.\]
Moreover, $H_j$, $j=1,2,$  denotes the selfadjoint operator $(-{\rm i}\nabla+A_j)  + V_j$ acting on $L^2(\Omega)$ with domain 
\[D(H_j)=\{v\in H^1_0(\Omega):\  (-{\rm i}\nabla+A_j) v\in L^2(\Omega)\}.\]
\end{proposition}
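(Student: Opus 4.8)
The plan is to read off both \eqref{l1a} and \eqref{l1b} from a single bilinear Green identity applied to the \emph{genuine} solution $u_{j,\lambda}$ of \eqref{eq:lambda} carrying the boundary data $f={\Phi_1}_{|\Gamma}$, after splitting $u_{j,\lambda}$ into the ansatz $\Phi_1$ plus a resolvent remainder; the computation is identical for $j=1,2$, only the coefficients $(A_j,V_j)$ changing. First I record that the formal transpose of $(-{\rm i}\nabla+A_j)^2$ with respect to the bilinear pairing $\int_\Omega uv\,dx$ used in \eqref{kt1} is $(-{\rm i}\nabla-A_j)^2$. A direct integration by parts, in which the two interior contributions proportional to $\mathrm{div}(A_j)$ cancel exactly, gives for $u,v\in H^2(\Omega)$
\[\int_\Omega [(-{\rm i}\nabla+A_j)^2 u]\,v\,dx-\int_\Omega u\,[(-{\rm i}\nabla-A_j)^2 v]\,dx=\int_\Gamma\left[u(\partial_\nu-{\rm i}A_j\cdot\nu)v-((\partial_\nu+{\rm i}A_j\cdot\nu)u)\,v\right]d\sigma,\]
which is the Green formula naturally adapted to the definition of $\Lambda_{j,\lambda}$.

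Next I apply this identity with $u=u_{j,\lambda}$ and $v=\Phi_2$. Since $u_{j,\lambda}$ solves \eqref{eq:lambda}, one has $(-{\rm i}\nabla+A_j)^2u_{j,\lambda}=(\lambda-V_j)u_{j,\lambda}$, and using ${u_{j,\lambda}}_{|\Gamma}={\Phi_1}_{|\Gamma}$ together with $S_j=\int_\Gamma((\partial_\nu+{\rm i}A_j\cdot\nu)u_{j,\lambda})\Phi_2\,d\sigma$ I obtain
\[S_j=\int_\Gamma \Phi_1(\partial_\nu-{\rm i}A_j\cdot\nu)\Phi_2\,d\sigma+\int_\Omega u_{j,\lambda}\left[(-{\rm i}\nabla-A_j)^2\Phi_2+V_j\Phi_2-\lambda\Phi_2\right]dx.\]
I then write $u_{j,\lambda}=\Phi_1+r_j$, where $r_j\in H^1_0(\Omega)$ solves $(-{\rm i}\nabla+A_j)^2r_j+V_jr_j-\lambda r_j=-[(-{\rm i}\nabla+A_j)^2\Phi_1+V_j\Phi_1-\lambda\Phi_1]$, i.e. $r_j=-(H_j-\lambda)^{-1}\bigl[(-{\rm i}\nabla+A_j)^2\Phi_1+V_j\Phi_1-\lambda\Phi_1\bigr]$. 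This is legitimate since $\lambda\in\C\setminus\R\subset\C\setminus\sigma(H_j)$, and because $A_{j,\sharp}$ is smooth and $b_2\in W^{2,\infty}$ the residual lies in $L^2(\Omega)$, so $r_j\in D(H_j)\hookrightarrow H^2(\Omega)$ by \cite[Theorem 2.2.2.3]{Gr}; in particular $u_{j,\lambda}=\Phi_1+r_j\in H^2(\Omega)$, which justifies the use of the Green formula.

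The heart of the proof is to compute the two residuals explicitly. Using $\zeta_1\cdot\zeta_1=-\lambda$ (equivalently $\abs{\eta_1}=1$), which cancels the $\lambda\Phi_1$ contribution, and the transport equation $\eta_1\cdot\nabla\psi_1=-\eta_1\cdot A_{1,\sharp}$, which collapses the $\sqrt{\lambda}$-coefficient, one finds
\[(-{\rm i}\nabla+A_j)^2\Phi_1+V_j\Phi_1-\lambda\Phi_1=\left[2\sqrt{\lambda}\,\eta_1\cdot(A_j-A_{1,\sharp})+q_{j1}\right]\Phi_1,\]
with $q_{j1}$ exactly the order-zero coefficient listed in the statement. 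Symmetrically, working with the transposed operator $(-{\rm i}\nabla-A_j)^2$ (note the sign flip $A_j\mapsto-A_j$), using $\abs{\eta_2}=1$, the transport equation $\eta_2\cdot\nabla\psi_2=-\eta_2\cdot A_{2,\sharp}$ and the transversality condition $\eta_2\cdot\nabla b_2=0$ (which kills the spurious $\sqrt{\lambda}\,\eta_2\cdot\nabla b_2$ term), a direct computation gives
\[(-{\rm i}\nabla-A_j)^2\Phi_2+V_j\Phi_2-\lambda\Phi_2=\left[2\sqrt{\lambda}\,\eta_2\cdot(A_j-A_{2,\sharp})b_2+V_jb_2-q_{j2}\right]e^{-{\rm i}\sqrt{\lambda}\eta_2\cdot x}e^{-{\rm i}\psi_2},\]
with $q_{j2}$ as in the statement.

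Finally I substitute $u_{j,\lambda}=\Phi_1+r_j$ into the expression of $S_j$. The $\Phi_1$ part produces, via $\Phi_1 e^{-{\rm i}\sqrt{\lambda}\eta_2\cdot x}e^{-{\rm i}\psi_2}=e^{{\rm i}\sqrt{\lambda}(\eta_1-\eta_2)\cdot x}e^{{\rm i}(\psi_1-\psi_2)}$, the two interior integrals together with the boundary integral, the latter after computing
\[(\partial_\nu-{\rm i}A_j\cdot\nu)\Phi_2=-{\rm i}\left(\sqrt{\lambda}\,b_2\eta_2+b_2\nabla\psi_2+{\rm i}\nabla b_2+b_2A_j\right)\cdot\nu\;e^{-{\rm i}\sqrt{\lambda}\eta_2\cdot x}e^{-{\rm i}\psi_2}.\]
The $r_j$ part produces precisely the resolvent integral, since $r_j$ is minus $(H_j-\lambda)^{-1}$ applied to the first residual of Step~3, paired against the second residual. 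Collecting the terms for $j=1$ and $j=2$ yields \eqref{l1a} and \eqref{l1b}. I expect the main obstacle to be the residual computation for $\Phi_2$: one must expand with the transposed operator $(-{\rm i}\nabla-A_j)^2$ and check that the leading $\lambda$-term cancels while the $\sqrt{\lambda}$-term reduces to $\eta_2\cdot(A_j-A_{2,\sharp})$ only after invoking both the transport equation for $\psi_2$ and $\eta_2\cdot\nabla b_2=0$; keeping the sign conventions of the transpose consistent is the one genuinely delicate bookkeeping point, the rest being routine differentiation.
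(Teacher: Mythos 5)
Your proof is correct and follows essentially the same route as the paper: the decomposition $u_{j,\lambda}=\Phi_1+r_j$ with $r_j$ given by the resolvent applied to the residual $\bigl[2\sqrt{\lambda}\,\eta_1\cdot(A_j-A_{1,\sharp})+q_{j1}\bigr]\Phi_1$, the computation of the transposed operator acting on $\Phi_2$ via the transport equations for $\psi_2$ and $\eta_2\cdot\nabla b_2=0$, and the identification of the boundary term are exactly the steps of the paper's argument; your single bilinear Green identity merely packages the paper's two successive applications of the Green formula into one step. (Your version $V_jb_2-q_{j2}$ in the interior integral is the internally consistent one, matching the resolvent term of \eqref{l1a}; the placement of $b_2$ in the second line of \eqref{l1a} as printed is a harmless typo.)
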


Note that formulas \eqref{l1a}-\eqref{l1b} contain expressions involving the magnetic potentials $A_1$, $A_2$ and the electric potentials  $V_1$, $V_2$, expressions on the boundary $\partial\Omega$ and expressions described by the resolvent  $(H_j-\lambda)^{-1}$, $j=1,2$. Using condition \eqref{t1a} one can check that the expressions on $\partial\Omega$ of $S_1$ and $S_2$ coincide and applying the decay of the resolvent $(H_j-\lambda)^{-1}$, $j=1,2$, as $\im \lambda\to+\infty$ we will show in the next subsection that, for some suitable choice of our ansatzs, the expressions
$$-\int_\Omega \left[(H_1-\lambda)^{-1}\left(2\sqrt{\lambda}\eta_1\cdot(A_1-A_{1,\sharp})+q_{11}\right)\Phi_1\right]\left(2\sqrt{\lambda}\eta_2\cdot (A_1-A_{2,\sharp})b_2+V_1b_2-q_{12}\right)e^{-{\rm i}\sqrt{\lambda}{\eta}_2 \cdot x}e^{-{\rm i}\psi_2}dx,$$
$$-\int_\Omega \left[(H_2-\lambda)^{-1}\left(2\sqrt{ \lambda}\eta_1\cdot(A_2-A_{1,\sharp})+q_{21}\right)\Phi_1\right](2\sqrt{\lambda}\eta_2\cdot (A_2-A_{2,\sharp})b_2+V_2b_2-q_{22})e^{-{\rm i}\sqrt{\lambda}{\eta}_2 \cdot x}e^{-{\rm i}\psi_2}dx,$$
vanish as $\im \lambda\to+\infty$. Thus, what will remain in the asymptotic expansion of $S_1-S_2$,  as $\im\lambda\to+\infty$, will be two expressions involving $A_1-A_2$ and $V_1-V_2$. These two expressions, that will be given in the next subsection, are one of the main ingredients in our proof. The remaining of this subsection will be devoted to the proof of Proposition \ref{l1}.\\
\textbf{Proof of Proposition \ref{l1}.} Let us first remark that the expressions \eqref{l1a}-\eqref{l1b} correspond to some asymptotic expansion of the expression $S_j$, $j=1,2$, with respect to $\sqrt{\lambda}$ \footnote{This statement will be clarified in the next subsection where we will give additional information about the parameter $\lambda$ and the vectors $\eta_1$, $\eta_2$.}. We will prove  \eqref{l1a}-\eqref{l1b} by combining properties of the ansatzs \eqref{anz}, with properties of solutions of \eqref{eq:lambda} when $f=\Phi_1$.  This proof will be divided into two steps, first for $S_1$ then for $S_2$. We start by showing that for $j=1,2$ and $f=\Phi_1$ problem \eqref{eq:lambda} admits a unique solution $u_j\in H^2(\Omega)$ taking the form
\bel{l1d}
u_1=\Phi_1-(H_1-\lambda)^{-1}\left[2\sqrt{\lambda}\eta_1\cdot (A_1-A_{1,\sharp})+q_{11}\right]\Phi_1,
\ee
\bel{l1i}
u_2=\Phi_1-(H_2-\lambda)^{-1}\left[2\sqrt{ \lambda}\eta_1\cdot(A_2-A_{1,\sharp})+q_{21}\right]\Phi_1.
\ee
Then, combining these formulas with the properties of the ansatzs \eqref{anz} and applying the Green formula, we derive \eqref{l1a}-\eqref{l1b}.

We start with  the expression of $S_1(\lambda,\eta_1,\eta_2)$. Let us first prove \eqref{l1d}. Recall that 
\[(-{\rm i}\nabla+A_1)^2 u+V_1u-\lambda u=-\Delta u -2{\rm i} A_1\cdot \nabla u+qu -\lambda u\]
with $q(x)=-{\rm i}div (A_1)(x)+\abs{A_1(x)}^2+V_1(x)$. Therefore, in light of \eqref{jt1} we have
\[\begin{array}{l}(-{\rm i}\nabla+A_1)^2 \Phi_1+V_1\Phi_1-\lambda \Phi_1\\
=(\lambda+2\sqrt{\lambda}\eta_1\cdot\nabla\psi_1-{\rm i}\Delta \psi_1+\abs{\nabla\psi_1}^2)\Phi_1+(2\sqrt{\lambda} \eta_1\cdot A_1+2 A_1\cdot\nabla\psi_1)\Phi_1+q\Phi_1-\lambda\Phi_1\\
=2\sqrt{\lambda}(\eta_1\cdot\nabla\psi_1+\eta_1\cdot A_1)\Phi_1+ q_{11}\Phi_1\end{array}\]
with $q_{11}=q+2 A_1\cdot\nabla\psi_1-{\rm i}\Delta \psi_1+\abs{\nabla\psi_1}^2$. On the other hand, since $\psi_1$ satisfies $\eta_1\cdot\nabla\psi_1+\eta_1\cdot A_{1,\sharp}=0$, we deduce that
\bel{l1c}
(-{\rm i}\nabla+A_1)^2 \Phi_1+V_1\Phi_1-\lambda \Phi_1=\left[2\sqrt{\lambda}\eta_1\cdot (A_1-A_{1,\sharp})+q_{11}\right]\Phi_1.
\ee
Now consider $u_1$ the solution of
\[\left\{ 
\begin{array}{rcll} 
(-{\rm i}\nabla+A_1)^2 u_1 + V_1u_1 -\lambda u_1 & = & 0, & \mbox{in}\ \Omega ,\\ 
u_1(x) & = & \Phi_1(x),& x\in \partial\Omega.
\end{array}\right.\]
Note that, with our assumptions one can check that $D(H_1)=H^1_0(\Omega)\cap H^2(\Omega)$. In view of \eqref{l1c}, we can split $u_1$ into two terms $u_1=\Phi_1+v_1$ with $v_1$ the solution of 
\[\left\{ 
\begin{array}{rcll} 
(-{\rm i}\nabla+A_1)^2 v_1 + V_1v_1 -\lambda v_1 & = & -\left[2\sqrt{\lambda}\eta_1\cdot (A_1-A_{1,\sharp})+q_{11}\right]\Phi_1, & \mbox{in}\ \Omega ,\\ 
v_1(x) & = & 0,& x\in \partial\Omega.
\end{array}\right.\]
Then, $u_1\in H^2(\Omega)$ take the form \eqref{l1d}. Using this formula we will complete the proof of \eqref{l1a}. Since
\bel{l1f}
S_1 =\int_{\partial\Omega}(\partial_\nu+{\rm i}A_1\cdot\nu) u_1(x) e^{-{\rm i} \sqrt{\lambda}{\eta}_2\cdot x}b_2e^{-{\rm i}\psi_2(x)}d \sigma(x),
\ee
from \eqref{kt1}, applying Green formula,
we get 
\bel{l1g}
\begin{aligned}S_1&=
\int_\Omega div\left( (\nabla+{\rm i}A_1(x)) u_1(x) e^{-{\rm i} \sqrt{\lambda}{\eta}_2\cdot x}b_2e^{-{\rm i}\psi_2(x)}\right)  dx\\
\ &= \int_\Omega (\nabla+{\rm i}A_1)^2u_1e^{-{\rm i} \sqrt{\lambda}{\eta}_2\cdot x}b_2e^{-{\rm i}\psi_2}dx+\int_\Omega (\nabla+{\rm i}A_1)u_1\cdot (\nabla-{\rm i}A_1)e^{-{\rm i} \sqrt{\lambda}{\eta}_2\cdot x}b_2e^{-{\rm i}\psi_2}dx.\end{aligned}
\ee
 Doing the same with the second term on the right hand side of this formula, we find out that
\beas
& & \int_\Omega (\nabla+{\rm i}A_1)u_1\cdot (\nabla-{\rm i}A_1)e^{-{\rm i} \sqrt{\lambda}{\eta}_2\cdot x}b_2e^{-{\rm i}\psi_2}dx \\
& = & -{\rm i}\int_{\Gamma}  u_1(x) e^{-{\rm i}\sqrt{\lambda}{\eta}_2\cdot x} e^{-{\rm i}\psi_2}(\sqrt{\lambda} b_2\eta_2+b_2\nabla\psi_2+{\rm i}\nabla b_2 +b_2A_1)\cdot \nu d\sigma(x)\\&\ &
- \int_\Omega u_1(x) (\nabla-{\rm i}A_1)^2e^{-{\rm i}\sqrt{\lambda}{\eta}_2 \cdot x}b_2e^{-{\rm i}\psi_2}dx.
\eeas
In light of \eqref{jt1} and the identity ${u_1}_{\vert\Gamma}=\Phi_{1}$, this entails
\[\begin{array}{l}
 \int_\Omega (\nabla+{\rm i}A_1)u_1\cdot (\nabla-{\rm i}A_1)e^{-{\rm i} \sqrt{\lambda}{\eta}_2\cdot x}b_2e^{-{\rm i}\psi_2}dx \\
 =  -{\rm i} \int_{\Gamma}   e^{{\rm i}\sqrt{\lambda}({\eta}_1-{\eta}_2)\cdot x} e^{{\rm i}(\psi_1(x)-\psi_2(x))}(\sqrt{\lambda} b_2\eta_2+b_2\nabla\psi_2+{\rm i}\nabla b_2 +b_2A_1)\cdot \nu d\sigma(x)\\
\ \ \ -
\int_\Omega u_1(x) (\nabla-{\rm i}A_1)^2e^{-{\rm i}\sqrt{\lambda}{\eta}_2 \cdot x}b_2e^{-{\rm i}\psi_2}dx. 
\end{array}\]
Moreover, one can check that
\[(\nabla-{\rm i}A_1)^2e^{-{\rm i}\sqrt{\lambda}{\eta}_2 \cdot x}b_2e^{-{\rm i}\psi_2}=\left(-\lambda b_2-2\sqrt{\lambda}(\eta_2\cdot\nabla\psi_2+A_1\cdot\eta_2)b_2-2{\rm i}\sqrt{\lambda}\eta_2\cdot \nabla b_2+q_{12}\right)e^{-{\rm i}\sqrt{\lambda}{\eta}_2 \cdot x}e^{-{\rm i}\psi_2}\]
with $q_{12}=\Delta b_2-2{\rm i}\nabla\psi_2\cdot\nabla b_2-2{\rm i}\nabla b_2\cdot A_1+\left(-{\rm i}\Delta\psi_2-\abs{\nabla\psi_2}^2-2\nabla\psi_2\cdot A_1-{\rm i}div(A_1)-\abs{A_1}^2\right)b_2$. Combining this with the fact that $\psi_2$ satisfies $\eta_2\cdot\nabla\psi_2+\eta_2\cdot A_{2,\sharp}=0$ and $b_2$ solves $\eta_2\cdot\nabla b_2=0$, we deduce that
\[(\nabla-{\rm i}A_1)^2e^{-{\rm i}\sqrt{\lambda}{\eta}_2 \cdot x}b_2e^{-{\rm i}\psi_2}=\left([-\lambda-2\sqrt{\lambda}\eta_2\cdot (A_1-A_{2,\sharp})]b_2+q_{12}\right)e^{-{\rm i}\sqrt{\lambda}{\eta}_2 \cdot x}e^{-{\rm i}\psi_2}.\]
Therefore, we find
\[\begin{array}{l}
 \int_\Omega (\nabla+{\rm i}A_1)u_1\cdot (\nabla-{\rm i}A_1)e^{-{\rm i} \sqrt{\lambda}{\eta}_2\cdot x}b_2e^{-{\rm i}\psi_2}dx \\
 =  -{\rm i} \int_{\Gamma}   e^{{\rm i}\sqrt{\lambda}({\eta}_1-{\eta}_2)\cdot x} e^{{\rm i}(\psi_1(x)-\psi_2(x))}(\sqrt{\lambda} b_2\eta_2+b_2\nabla\psi_2+{\rm i}\nabla b_2 +b_2A_1)\cdot \nu d\sigma(x)\\
\ \ \ -
\int_\Omega u_1(x) \left(-\lambda b_2-2\sqrt{\lambda}\eta_2\cdot (A_1-A_{2,\sharp})b_2+q_{12}\right)e^{-{\rm i}\sqrt{\lambda}{\eta}_2 \cdot x}e^{-{\rm i}\psi_2}dx.
\end{array}\]
Then, from \eqref{l1d} we get
\begin{eqnarray}\label{l1h}
 &&\int_\Omega (\nabla+{\rm i}A_1)u_1\cdot (\nabla-{\rm i}A_1)e^{-{\rm i} \sqrt{\lambda}{\eta}_2\cdot x}b_2e^{-{\rm i}\psi_2}dx \cr
 &&=  -{\rm i} \int_{\Gamma}   e^{{\rm i}\sqrt{\lambda}({\eta}_1-{\eta}_2)\cdot x} e^{{\rm i}(\psi_1(x)-\psi_2(x))}(\sqrt{\lambda} b_2\eta_2+b_2\nabla\psi_2+{\rm i}\nabla b_2 +b_2A_1)\cdot \nu d\sigma(x)\cr
&&\ \ \ +\lambda \int_\Omega u_1e^{-{\rm i}\sqrt{\lambda}{\eta}_2 \cdot x}b_2e^{-{\rm i}\psi_2}dx+2\sqrt{\lambda}\int_\Omega \eta_2\cdot (A_1-A_{2,\sharp})e^{{\rm i}\sqrt{\lambda}({\eta}_1-{\eta}_2)\cdot x} b_2 e^{{\rm i}(\psi_1(x)-\psi_2(x))}dx\cr
&&\ \ \ -\int_\Omega q_{12}e^{{\rm i}\sqrt{\lambda}({\eta}_1-{\eta}_2)\cdot x} b_2 e^{{\rm i}(\psi_1(x)-\psi_2(x))}dx\cr
&&\ \ \ -\int_\Omega [(H_1-\lambda)^{-1}\left(2\sqrt{\lambda}\eta_1\cdot (A_1-A_{1,\sharp})+q_{11}\right)\Phi_1]\left(2\sqrt{\lambda}\eta_2\cdot (A_1-A_{2,\sharp})b_2-q_{12}\right)e^{-{\rm i}\sqrt{\lambda}{\eta}_2 \cdot x}e^{-{\rm i}\psi_2}dx.\cr
&&
\end{eqnarray}
Next, taking into account   the fact that
$(\nabla+{\rm i}A_1)^2u_1= (V_1 -\lambda) u_1$ in $\Omega$, we obtain
\[\begin{array}{ll}\int_\Omega (\nabla+{\rm i}A_1)^2u_1e^{-{\rm i} \sqrt{\lambda}{\eta}_2\cdot x}b_2e^{-{\rm i}\psi_2}dx&=\int_\Omega (V_1 -\lambda) u_1e^{-{\rm i} \sqrt{\lambda}{\eta}_2\cdot x}b_2e^{-{\rm i}\psi_2}dx\\
\ \\
\ &=-\lambda\int_\Omega  u_1e^{-{\rm i} \sqrt{\lambda}{\eta}_2\cdot x}b_2e^{-{\rm i}\psi_2}dx+\int_{\Omega}   V_1e^{{\rm i}\sqrt{\lambda}({\eta}_1-{\eta}_2)\cdot x} b_2 e^{{\rm i}(\psi_1(x)-\psi_2(x))}dx\\
\ \\
\ &\ \ \ -\int_\Omega V_1\left[(H_1-\lambda)^{-1}\left(2\sqrt{\lambda}\eta_1\cdot (A_1-A_{1,\sharp})+q_{11}\right)\Phi_1\right]e^{-{\rm i} \sqrt{\lambda}{\eta}_2\cdot x}b_2e^{-{\rm i}\psi_2}dx.\end{array}\]
Finally, we deduce \eqref{l1a} from \eqref{l1g}-\eqref{l1h}. 

Now let us consider \eqref{l1b}. For this purpose, we start by proving formula \eqref{l1i}. In a similar way to \eqref{l1d}, we have
\[(-{\rm i}\nabla+A_2)^2 \Phi_1+V_2\Phi_1-\lambda \Phi_1
=2\sqrt{\lambda}(\eta_1\cdot\nabla\psi_1+A_2\cdot \eta_1)\Phi_1+ q_{21}\Phi_1\]
with $q_{21}=-{\rm i}div (A_2)+\abs{A_2}^2+V_2(x)+2 A_2\cdot\nabla\psi_1-{\rm i}\Delta \psi_1+\abs{\nabla\psi_1}^2$. Then, since $\psi_1$ is a solution of $\eta_1\cdot\nabla\psi_1+\eta_1\cdot A_{1,\sharp}=0$, we deduce that
\[(-{\rm i}\nabla+A_2)^2 \Phi_1+V_2\Phi_1-\lambda \Phi_1=\left(2\sqrt{ \lambda}\eta_1\cdot(A_2-A_{1,\sharp})+q_{21}\right)\Phi_1.\]
Moreover, one can check that the solution  $u_2$  of
\[\left\{ 
\begin{array}{rcll} 
(-{\rm i}\nabla+A_2)^2 u_2 + V_2u_2 -\lambda u_2 & = & 0, & \mbox{in}\ \Omega ,\\ 
u_2(x) & = & \Phi_1(x),& x\in \partial\Omega
\end{array}\right.\]
is given by \eqref{l1i}. Repeating our previous arguments, we deduce 
\bel{l1j}S_2= \int_\Omega (\nabla+{\rm i}A_2)^2u_2e^{-{\rm i} \sqrt{\lambda}{\eta}_2\cdot x}b_2e^{-{\rm i}\psi_2}dx+\int_\Omega (\nabla+{\rm i}A_2)u_2\cdot (\nabla-{\rm i}A_2)e^{-{\rm i} \sqrt{\lambda}{\eta}_2\cdot x}b_2e^{-{\rm i}\psi_2}dx.\ee
On the other hand, using the fact that $\psi_2$ is a solution of the equation $\eta_2\cdot\nabla\psi_2+\eta_2\cdot A_{2,\sharp}=0$, we get
\begin{eqnarray}
 &&\int_\Omega (\nabla+{\rm i}A_2)u_2\cdot (\nabla-{\rm i}A_2)e^{-{\rm i} \sqrt{\lambda}{\eta}_2\cdot x}b_2e^{-{\rm i}\psi_2}dx \cr
&& =  -{\rm i} \int_{\Gamma}   e^{{\rm i}\sqrt{\lambda}({\eta}_1-{\eta}_2)\cdot x} e^{{\rm i}(\psi_1(x)-\psi_2(x))}(\sqrt{\lambda} b_2\eta_2+b_2\nabla\psi_2+{\rm i}\nabla b_2 +b_2A_2)\cdot \nu d\sigma(x)\cr
&&\ \ \ -
\int_\Omega u_2(x) \left(-\lambda b_2-2\sqrt{\lambda}\eta_2\cdot (A_2-A_{2,\sharp})b_2+q_{22}\right)e^{-{\rm i}\sqrt{\lambda}{\eta}_2 \cdot x}e^{-{\rm i}\psi_2}dx
\end{eqnarray}
with $q_{22}=\Delta b_2-2{\rm i}\nabla\psi_2\cdot\nabla b_2-2{\rm i}\nabla b_2\cdot A_2+\left(-{\rm i}\Delta\psi_2-\abs{\nabla\psi_2}^2-2\nabla\psi_2\cdot A_2-{\rm i}div(A_2)-\abs{A_2}^2\right)b_2$.
Combining this with \eqref{l1i}-\eqref{l1j} and repeating our previous arguments we obtain \eqref{l1b}.\qed

\subsection{Asymptotic properties of $S_1-S_2$ and representation formulas for $A_1-A_2$ and $V_1-V_2$}

In this subsection we will apply formulas \eqref{l1a}-\eqref{l1b} in order to derive two expressions involving $A_1-A_2$ and $V_1-V_2$ from   the asymptotic expansion of $S_1-S_2$,  as $\im\lambda\to+\infty$. For this purpose, we start by specifying our choice for the parameter $\lambda$, the function $A_{j,\sharp}$, the vector $\eta_j$, $j=1,2$,  and the function $b_2$ appearing in \eqref{anz}. Let us first define  the parameter $\lambda$ and the vectors $\eta_1$, $\eta_2$. We consider an arbitrary $\xi \in \R^n \setminus \{0\}$ and pick $\eta \in \mathbb S^{n-1}$ such that $\eta \cdot \xi=0$. Then, for $\tau>\abs{\xi}$ we put 
\bel{BA1} 
B_\tau=\sqrt{1-\frac{\abs{\xi}^2}{4\tau^2}},\ \eta_1(\tau)=B_\tau\eta-\frac{\xi}{2\tau},\ \eta_2(\tau)=B_\tau\eta+\frac{\xi}{2\tau}\ \mbox{and}\ \lambda(\tau)=(\tau+i)^2,
\ee
in such a way that 
\bel{BA2}\left\{\begin{aligned} \eta_1,\eta_2\in\mathbb S^{n-1},\\
\sqrt{\lambda}(\eta_1-\eta_2)\to-\xi,\quad \textrm{as } \tau\to+\infty,\\
\im\lambda\to+\infty,\quad \textrm{as } \tau\to+\infty,\\
 \im \sqrt{\lambda}\eta_1,\im \sqrt{\lambda}\eta_2\ \ \textrm{are bounded with respect to } \tau>\abs{\xi}.\end{aligned}\right.\ee
In order to get a suitable expression of the functions  $A_{j,\sharp}$, we first need to extend identically the magnetic potentials $A_j$, $j=1,2$. For this purpose we set $\tilde{\Omega}$  an arbitrary open bounded set of $\R^n$ such that $\overline{\Omega}\subset\tilde{\Omega}$ and we define $\tilde{A_1}\in \mathcal C^1_0(\tilde{\Omega},\R^n)$ such that ${\tilde{A_1}}_{|\Omega}=A_1$. Then,  we define $\tilde{A_2}$ by
\[\tilde{A}_2(x)=\left\{\begin{array}{l} A_2(x),\ \textrm{for }x\in\Omega,\\ \tilde{A}_1(x),\ \textrm{for }x\in\tilde{\Omega}\setminus\Omega.\end{array}\right.\]
In view of \eqref{t1a}, it is clear that $\tilde{A}_2\in \mathcal C^1_0(\tilde{\Omega},\R^n)$.  
We define the functions  $A_{j,\sharp}\in\mathcal C^\infty_0(\R^n;\R^n)$, $j=1,2$, by
\[A_{j,\sharp}(x):=\chi_{\delta}*\tilde{A}_j(x)=\int_{\R^n}\chi_\delta(x-y)\tilde{A}_j(y)dy,\]
where $\chi_\delta(x)=\delta^{-n}\chi(\delta^{-1}x)$, with $\delta>0$, is the usual mollifer with $\chi\in\mathcal C_0^\infty(\R^n)$, supp$(\chi)\subset\{x\in\R^n:\ |x|\leq 1\}$, $\chi\geq 0$ and $\int_{\R^n}\chi dx=1$. From now on we set $\delta=\tau^{-{1\over 3}}$ and we recall that
\[\psi_j(x)=-\int_{-\infty}^{0}\eta_j\cdot A_{j,\sharp}(x+s\eta_j)ds.\]
 We set also
\bel{b2}b_2(x)= e^{{\rm i}\omega\cdot x}y\cdot\nabla \left[ \exp\left( -{\rm i}\int_\R \eta_2\cdot A_\sharp(x+s\eta_2)ds\right)e^{-{\rm i}\omega\cdot x}\right],\ee
where $A_\sharp=A_{2,\sharp}-A_{1,\sharp}$, $\omega=B_\tau\xi-{|\xi|^2\eta\over 2\tau}\in\eta_2^\bot$, $B_\tau=\sqrt{1-{|\xi|^2\over 4\tau^2}}$, and 
\[b(x)= e^{{\rm i}x\cdot\xi}y\cdot\nabla \left[ \exp\left( -{\rm i}\int_\R \eta\cdot A(x+s\eta)ds\right)e^{-{\rm i}x\cdot\xi}\right],\quad \psi(x)=\int_{-\infty}^{0}\eta\cdot A(x+s\eta)ds.\]
Here $y\in\mathbb S^{n-1}\cap\eta^{\bot}$, $y\cdot\nabla$ denotes the derivative in the $y=(y_1,\ldots,y_n)$ direction given by 
$$y\cdot\nabla=\sum_{j=1}^ny_j\partial_{x_j}$$
and $A$ is the function defined by $A_2-A_1$ on $\Omega$  extended by $0$ outside of $\Omega$. Note that, in view of condition \eqref{t1a} we have $A\in \mathcal C^1_0(\Omega)$.
Since  $\tilde{A}_j\in \mathcal C^1_0(\R^n,\R^n)$, we find 
\bel{mol1}\norm{A_{j,\sharp}-A_j}_{L^\infty(\Omega)}\leq \norm{A_{j,\sharp}-\tilde{A}_j}_{L^\infty(\R^n)}\leq C \delta=C \tau^{-{1\over 3}}\ee
with $C$ depending on  $\Omega$ and any $ M\geq \underset{j=1,2}{\max}\norm{\tilde{A_j}}_{W^{1,\infty}(\R^n)}$. On the other hand, one can check that 
\bel{mol2}\norm{\partial_x^\alpha A_{j,\sharp}}_{L^\infty(\R^m)}\leq C\delta^{\abs{\alpha}-1}=C\tau^{{\abs{\alpha}-1\over 3}},\quad \alpha\in\mathbb N^n\setminus\{0\},\ee
where $C$ depends on $\Omega$ and any $ M\geq \underset{j=1,2}{\max}\norm{\tilde{A_j}}_{W^{1,\infty}(\R^n)}$. 

\begin{remark} Let us observe that, our anstazs are related  to the principal part of the complex geometric optics solutions of  \cite{SUU} and the extension of this construction to magnetic Schr\"odinger operators by \cite{FKSU,KLU,KU,NSU1,Sa1,Su}. Nevertheless, in contrast to the complex geometric optics solutions of \cite{SUU}, the large parameter of the ansatzs \eqref{anz}, that will be send to $+\infty$ for the uniqueness result, is given by $\im\lambda$ where the parameter $\lambda$ appears explicitly in \eqref{eq:lambda}. This makes it possible to construct ansatzs bounded with respect to the large parameter and to use the resolvent $(H_j-\lambda)^{-1}$, $j=1,2$, for the construction of a remainder term that admits a decay with respect to the large parameter $\im\lambda$. Moreover, in contrast to the geometric optics solutions of \cite{SUU}, whose principal parts take the form  \eqref{anz} when $\zeta_j\cdot\zeta_j=0$, our construction is not restricted to dimension $n\geq3$. Indeed,  the vector  $\zeta_j$, $j=1,2$, that we consider in the present paper are subjected  only to the condition \eqref{BA2}, already considered by \cite{I},  which requires only the two orthogonal vectors $\eta$ and $\xi$ appearing in \eqref{BA1}. For this reason, in contrast to the construction of \cite{SUU}, that requires three orthogonal vectors, our construction works also for $n=2$.\end{remark}

From now on, our goal is to derive  from \eqref{l1a}-\eqref{l1b} two formulas from some asymptotic properties of $S_1-S_2$ as $\tau\to+\infty$. For this purpose we need the following  intermediate result which follows from \eqref{mol1} and \eqref{mol2}.

\begin{lemma}\label{ll2} 
Let  the condition introduced above be fulfilled. Then, we have
\bel{ll2a} \sup_{\tau>|\xi|+1}\norm{b_2}_{L^\infty(\R^n)}<\infty\ee
and  
\bel{ll2b}\lim_{\tau\to+\infty} b_2(x)=b(x),\quad \lim_{\tau\to+\infty}\psi_1(x)-\psi_2(x)=\psi(x),\ x\in\R^n.\ee \end{lemma}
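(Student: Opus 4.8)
The plan is to reduce both $b_2$ and $b$ to a common algebraic form by carrying out the directional derivative $y\cdot\nabla$ explicitly, and then to pass to the limit factor by factor. Setting $F(x)=\exp\left(-{\rm i}\int_\R\eta_2\cdot A_\sharp(x+s\eta_2)\,ds\right)$ and $G(x)=\exp\left(-{\rm i}\int_\R\eta\cdot A(x+s\eta)\,ds\right)$, I would apply the Leibniz rule to the product inside the bracket in \eqref{b2}, using $y\cdot\nabla e^{-{\rm i}\omega\cdot x}=-{\rm i}(\omega\cdot y)e^{-{\rm i}\omega\cdot x}$, to obtain the clean expressions $b_2=y\cdot\nabla F-{\rm i}(\omega\cdot y)F$ and, likewise, $b=y\cdot\nabla G-{\rm i}(\xi\cdot y)G$. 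The three convergences I intend to use are: $\eta_1,\eta_2\to\eta$ and $\omega\to\xi$ as $\tau\to+\infty$ (immediate from \eqref{BA1}, since $B_\tau\to1$ and $\tau^{-1}\to0$); the standard mollifier fact that $A_\sharp=\chi_\delta*(\tilde A_2-\tilde A_1)=\chi_\delta*A$ and $A_{j,\sharp}=\chi_\delta*\tilde A_j$, together with their first derivatives, converge uniformly to $A$ and $\tilde A_j$ as $\delta=\tau^{-1/3}\to0$ (these functions lying in $\mathcal C^1_0$); and the support bound $\supp(\chi_\delta*\tilde A_j)\subset\supp\tilde A_j+\overline{B(0,\delta)}$, which for $\delta\le1$ confines every relevant line integral to an $s$-interval of length bounded uniformly in $\tau$.

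For the uniform bound \eqref{ll2a} I would first note $\abs{F}=1$, since $A_\sharp$ is real-valued and $\eta_2$ is real. Differentiating under the integral sign, $y\cdot\nabla F=-{\rm i}F\int_\R\eta_2\cdot(y\cdot\nabla A_\sharp)(x+s\eta_2)\,ds$; because $y\cdot\nabla A_\sharp$ is supported in the fixed compact set above and $\eta_2\in\mathbb S^{n-1}$, the $s$-integration runs over an interval of length at most $\mathrm{diam}(\supp A_\sharp)$, uniformly in $\tau$, while $\norm{\nabla A_\sharp}_{L^\infty}$ is bounded by \eqref{mol2} (with $\abs\alpha=1$). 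Hence $\abs{y\cdot\nabla F}\le C$ uniformly. Finally $\omega\cdot y=B_\tau(\xi\cdot y)$ because $\eta\cdot y=0$, so $\abs{\omega\cdot y}\le\abs\xi$. Combining these, $\abs{b_2}\le C+\abs\xi$ for all $\tau>\abs\xi+1$, which gives \eqref{ll2a}.

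For the pointwise limits \eqref{ll2b} I would pass to the limit in each factor. Using $\eta_2\to\eta$ and the uniform convergence $A_\sharp\to A$ on the bounded $s$-range, the exponent $\int_\R\eta_2\cdot A_\sharp(x+s\eta_2)\,ds\to\int_\R\eta\cdot A(x+s\eta)\,ds$, so $F(x)\to G(x)$ by continuity of the exponential; the same argument with $\nabla A_\sharp\to\nabla A$ gives $y\cdot\nabla F\to y\cdot\nabla G$, and together with $\omega\to\xi$ this yields $b_2(x)\to b(x)$. The statement for the phases is analogous: since $\psi_1-\psi_2=\int_{-\infty}^0\big[\eta_2\cdot A_{2,\sharp}(x+s\eta_2)-\eta_1\cdot A_{1,\sharp}(x+s\eta_1)\big]\,ds$, the uniform compact support again confines the $s$-integral to a bounded interval, and letting $\tau\to+\infty$ with $\eta_1,\eta_2\to\eta$ and $A_{j,\sharp}\to\tilde A_j$ uniformly produces $\int_{-\infty}^0\eta\cdot(\tilde A_2-\tilde A_1)(x+s\eta)\,ds=\int_{-\infty}^0\eta\cdot A(x+s\eta)\,ds=\psi(x)$, using $\tilde A_2-\tilde A_1=A$ by construction.

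The one point I expect to require genuine care — and the main technical obstacle — is the treatment of the semi-infinite and full-line integrals defining $\psi_j$ and $b_2$: the whole argument hinges on the observation that mollification enlarges supports by at most $\delta\le1$, so that all integrands vanish outside a single fixed compact set independent of $\tau$. Only then are the convergences $A_{j,\sharp}\to\tilde A_j$ and $\nabla A_{j,\sharp}\to\nabla A$ usable uniformly on the relevant range rather than merely pointwise, which is what legitimizes interchanging the limit $\tau\to+\infty$ with the line integrations.
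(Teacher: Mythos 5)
Your proposal is correct and follows essentially the same route as the paper: the same Leibniz expansion of $b_2$ into $\bigl(-{\rm i}\,\omega\cdot y-{\rm i}\int_\R\eta_2\cdot(y\cdot\nabla A_\sharp)(x+s\eta_2)\,ds\bigr)\exp\bigl(-{\rm i}\int_\R\eta_2\cdot A_\sharp(x+s\eta_2)\,ds\bigr)$, the same uniform bound exploiting $|\omega\cdot y|\leq|\xi|$ and the fact that the derivative falls on the compactly supported $\mathcal C^1_0$ field $\tilde A_2-\tilde A_1$ rather than on the mollifier, and the same splitting of the pointwise limit into a change of evaluation point ($x+s\eta_2$ versus $x+s\eta$) plus mollifier convergence, justified by the uniformly bounded $s$-range. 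The only (harmless) variation is in the evaluation-point term $y\cdot\nabla A_\sharp(x+s\eta_2)-y\cdot\nabla A_\sharp(x+s\eta)$: the paper controls it by the quantitative bound $\norm{A_\sharp}_{W^{2,\infty}(\R^n)}\,|s|\,|\eta_2-\eta|\leq C|s|\tau^{-2/3}$, which is where the specific choice $\delta=\tau^{-1/3}$ enters, whereas you invoke the uniform convergence $\nabla A_\sharp\to\nabla A$ together with the uniform continuity of $\nabla A$, which works for any $\delta\to0$ and is, if anything, slightly more elementary.
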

\begin{proof} Note first that
\bel{ll2c}b_2(x)=\left(-{\rm i}\omega\cdot y-{\rm i}\int_\R \eta_2\cdot y\cdot\nabla A_\sharp(x+s\eta_2)ds\right)\exp\left( -{\rm i}\int_\R \eta_2\cdot A_\sharp(x+s\eta_2)ds\right).\ee
On the other hand, we have $|\omega|\leq 1+|\xi|$ and, since $\tilde{A}_2-\tilde{A}_1$ is compactly supported and $\tilde{A}_2-\tilde{A}_1\in \mathcal C^1_0(\R^n,\R^n)$, we find $y\cdot\nabla A_\sharp=\chi_\delta*\left(y\cdot\nabla  (\tilde{A}_2-\tilde{A}_1)\right)$. Therefore, we obtain
\[\norm{b_2}_{L^\infty(\R^n)}\leq 1+|\xi|+C\norm{\chi_\delta}_{L^1(\R^n)}\norm{y\cdot\nabla  (\tilde{A}_2-\tilde{A}_1)}_{L^\infty(\R^n,\R^n)}\leq 1+|\xi|+CM\]
with $C$ a generic constant  depending only on $\Omega$  and $ M\geq \underset{j=1,2}{\max}\norm{\tilde{A_j}}_{W^{1,\infty}(\R^n)}$. From this last estimate we deduce \eqref{ll2a}. Now let us prove \eqref{ll2b}. Since $\tilde{A}_1$ and $\tilde{A}_2$ coincide outside of $\Omega$, we have $\tilde{A}_2-\tilde{A}_1=A$. Therefore, we deduce  that $A_\sharp=\chi_\delta* A$ and
\bel{ll2d}\abs{y\cdot\nabla A_\sharp(x+s\eta_2)-y\cdot\nabla A(x+s\eta)}\leq \abs{y\cdot\nabla A_\sharp(x+s\eta_2)-y\cdot\nabla A_\sharp(x+s\eta)}+\abs{y\cdot\nabla A_\sharp(x+s\eta)-y\cdot\nabla A(x+s\eta)}.\ee
The second term on the right hand side of this estimate can be rewritten as 
\[y\cdot\nabla A_\sharp(x+s\eta)-y\cdot\nabla A(x+s\eta)=\chi_\delta*[y\cdot\nabla A](x+s\eta)-y\cdot\nabla A(x+s\eta)\]
and since $A\in C^1_0(\R^n)$, we get
\bel{ll2e} \lim_{\tau\to+\infty} y\cdot\nabla A_\sharp(x+s\eta)-y\cdot\nabla A(x+s\eta)=0,\quad x\in\R^n,\ s\in\R.\ee
For the first term on the right hand side of \eqref{ll2d}, using the fact that for $\tau$ sufficiently large we have
\[\eta_2=\eta+{\xi\over 2\tau}+ \underset{\tau\to+\infty}{ o}\left({1\over\tau}\right)\]
and applying \eqref{mol2}, we get
\[\abs{y\cdot\nabla A_\sharp(x+s\eta_2)-y\cdot\nabla A_\sharp(x+s\eta)}\leq \norm{A_\sharp}_{W^{2,\infty}(\R^n)}\abs{s(\eta-\eta_1)}\leq C\abs{s}\tau^{-{2\over3}}\]
with $C$ depending on $\xi$, $\Omega$, $\tilde{A_1}$ and $\tilde{A_2}$. In view of this estimate we have
\[\lim_{\tau\to+\infty} y\cdot\nabla A_\sharp(x+s\eta_2)-y\cdot\nabla A_\sharp(x+s\eta)=0,\quad x\in\R^n,\ s\in\R.\]
Combining this last result with \eqref{ll2d}-\eqref{ll2e}, we get
\[\lim_{\tau\to+\infty}y\cdot\nabla A_\sharp(x+s\eta)=y\cdot\nabla A(x+s\eta),\quad x\in\R^n,\ s\in\R.\]
Then, using the fact that supp$(A_\sharp)\subset \Omega+\{x\in\R^n:\ |x|\leq \delta\}$ and \eqref{mol2}, by the dominate convergence theorem we get that 
\[\lim_{\tau\to+\infty}\int_\R y\cdot\nabla A_\sharp(x+s\eta_2)ds=\int_\R y\cdot\nabla A(x+s\eta)ds,\quad \textrm{$x\in\R^n$}.\]
Putting this together with \eqref{ll2c} and the fact that $\omega\to\xi$, $\eta_2\to\eta$ as $\tau\to+\infty$,  we obtain
\[\lim_{\tau\to+\infty}b_2(x)=\left(-{\rm i}\xi\cdot y+-{\rm i}\int_\R \eta\cdot y\cdot\nabla A(x+s\eta)ds\right)\exp\left( -{\rm i}\int_\R \eta\cdot A(x+s\eta)ds\right)=b(x),\ x\in\R^n.\]
Using similar arguments we deduce that
\[\lim_{\tau\to+\infty}\psi_1(x)-\psi_2(x)=\psi(x)=\int_{-\infty}^{ 0} \eta\cdot A(x+s\eta)ds,\quad x\in\R^n.\]
This completes the proof of the lemma.\end{proof}

Applying \eqref{l1a}-\eqref{l1b}, \eqref{mol1}-\eqref{ll2b} and sending $\tau\to+\infty$, we obtain our first formula involving the magnetic potentials $A_1, A_2$.

\begin{proposition}
\label{l2} 
 Fix $\xi\in\R^n\setminus \{0\}$  and $\eta\in \mathbb S^{n-1}$ such that $\eta\cdot\xi=0$.  Let  $\lambda$, $\eta_1$ and $\eta_2$ be defined by \eqref{BA1}  and let $b_2$ be defined by \eqref{b2}. Then,  we have
\bel{l2a}
\lim_{\tau \to+\infty} \frac{S_1-S_2}{\sqrt{\lambda}}=2\int_\Omega \eta\cdot(A_1-A_2)e^{-{\rm i}\xi\cdot x} b e^{{\rm i}\psi(x)}dx. 
\ee
\end{proposition}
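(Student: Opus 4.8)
The plan is to substitute the explicit expressions \eqref{l1a} and \eqref{l1b} into the quotient $(S_1-S_2)/\sqrt{\lambda}$ and let $\tau\to+\infty$ along the family \eqref{BA1}, keeping track of which of the four types of terms (the leading $2\sqrt{\lambda}$ volume term, the $V_j-q_{ij}$ volume term, the boundary term, and the resolvent term) survive. First I would record the elementary facts about \eqref{BA1}: since $\lambda=(\tau+{\rm i})^2$ we may take $\sqrt{\lambda}=\tau+{\rm i}$, so $\abs{\sqrt{\lambda}}\sim\tau$, $\im\sqrt{\lambda}=1$ and $\im\lambda=2\tau\to+\infty$. In particular $\im(\sqrt{\lambda}\,\eta_1)=\eta_1$ is bounded, and because $\psi_1$ is real valued ($A_{1,\sharp}$ is real and $\eta_1\in\R^n$), the ansatz $\Phi_1$ obeys $\norm{\Phi_1}_{L^2(\Omega)}\leq C$ uniformly in $\tau$; the same boundedness holds for all the exponential factors $e^{{\rm i}\sqrt{\lambda}(\eta_1-\eta_2)\cdot x}$, $e^{-{\rm i}\sqrt{\lambda}\eta_2\cdot x}$ and $e^{{\rm i}(\psi_1-\psi_2)}$ appearing below.

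The boundary contributions in \eqref{l1a} and \eqref{l1b} are identical except that the factor $b_2A_1$ in the first is replaced by $b_2A_2$ in the second, so their difference is $-{\rm i}\int_{\Gamma}e^{{\rm i}\sqrt{\lambda}(\eta_1-\eta_2)\cdot x}e^{{\rm i}(\psi_1-\psi_2)}b_2(A_1-A_2)\cdot\nu\,d\sigma(x)$, which vanishes identically since $A_1=A_2$ on $\Gamma$ by \eqref{t1a}. For the volume terms carrying $V_j-q_{ij}$ I would use \eqref{mol2}: the worst growth there comes from $\Delta b_2$, involving third derivatives of $A_{j,\sharp}$, hence $q_{ij}=O(\tau^{2/3})$, so each such integral is $O(\tau^{2/3})$ in $L^1(\Omega)$ and divides away after division by $\sqrt{\lambda}\sim\tau$. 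The delicate point, which I expect to be the main obstacle, is the resolvent terms. Using selfadjointness of $H_j$ one has $\norm{(H_j-\lambda)^{-1}}_{\mathcal B(L^2)}\leq\abs{\im\lambda}^{-1}=(2\tau)^{-1}$. For $S_1$ the source $\bigl(2\sqrt{\lambda}\,\eta_1\cdot(A_1-A_{1,\sharp})+q_{11}\bigr)\Phi_1$ is only $O(\tau^{2/3})$ in $L^2$, because $\norm{A_1-A_{1,\sharp}}_{L^\infty(\Omega)}\leq C\tau^{-1/3}$ by \eqref{mol1} while $q_{11}=O(\tau^{1/3})$, whereas the second factor is $O(\tau)$; for $S_2$ the roles reverse, the source being $O(\tau)$ (since $A_2-A_{1,\sharp}$ is $O(1)$) and the second factor $O(\tau^{2/3})$ (since $\norm{A_2-A_{2,\sharp}}_{L^\infty}\leq C\tau^{-1/3}$). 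In either case the product of the $O(\tau)$ and $O(\tau^{2/3})$ factors with the resolvent decay $(2\tau)^{-1}$ is $O(\tau^{2/3})=o(\sqrt{\lambda})$, so both resolvent terms disappear after division by $\sqrt{\lambda}$.

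What then remains of $(S_1-S_2)/\sqrt{\lambda}$ is only the leading term: subtracting the two $2\sqrt{\lambda}$-integrals, the smooth approximation $A_{2,\sharp}$ cancels between them and one is left with
\[2\int_\Omega \eta_2\cdot(A_1-A_2)\,e^{{\rm i}\sqrt{\lambda}(\eta_1-\eta_2)\cdot x}\,b_2\,e^{{\rm i}(\psi_1-\psi_2)}\,dx.\]
Finally I would pass to the limit in this integral by dominated convergence. By \eqref{BA2} we have $\eta_2\to\eta$ and $\sqrt{\lambda}(\eta_1-\eta_2)\to-\xi$, while Lemma \ref{ll2} gives the pointwise limits $b_2\to b$ and $\psi_1-\psi_2\to\psi$ together with the uniform bound \eqref{ll2a}. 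Since $A_1-A_2$ is bounded with compact support in $\Omega$ (by \eqref{t1a}) and all exponential factors are uniformly bounded on the bounded set $\Omega$, the integrand is dominated by $C\abs{A_1-A_2}\in L^1(\Omega)$ and converges pointwise to $\eta\cdot(A_1-A_2)e^{-{\rm i}\xi\cdot x}b\,e^{{\rm i}\psi(x)}$. The limit is therefore $2\int_\Omega\eta\cdot(A_1-A_2)e^{-{\rm i}\xi\cdot x}b\,e^{{\rm i}\psi(x)}\,dx$, which is exactly \eqref{l2a}.
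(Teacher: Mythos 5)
Your proof is correct and follows essentially the same route as the paper: substitute the representation formulas \eqref{l1a}--\eqref{l1b}, cancel the boundary terms via \eqref{t1a}, kill the $q_{ij}$ and resolvent contributions using \eqref{mol1}--\eqref{mol2} together with $\norm{(H_j-\lambda)^{-1}}\leq(2\tau)^{-1}$, and pass to the limit in the surviving term by dominated convergence with Lemma \ref{ll2}. Your order-counting for the two resolvent terms (one factor $O(\tau^{2/3})$, the other $O(\tau)$, yielding $O(\tau^{2/3})=o(\sqrt{\lambda})$ in each case) is in fact more explicit than the paper's one-line deduction, and it is accurate.
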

\begin{proof} 
With reference to \eqref{jt1} and \eqref{BA1} we have
$\abs{\Phi_{1}(x)}=e^{-\eta_1 \cdot x}$ and $\abs{e^{-{\rm i}\sqrt{\lambda} \eta_2 \cdot x}}=e^{\eta_2 \cdot x}$ for all $x\in\Omega$, 
hence $\norm{\Phi_{1}}_{L^2(\Omega)}^2 =  \int_{\Omega} e^{-2 \eta_1 \cdot x} dx \leq  | \Omega | e^{2 | \Omega |}$ and
$\norm{e^{-{\rm i}\sqrt{\lambda} \eta_2 \cdot x}}_{L^2(\Omega)}^2 \leq  | \Omega | e^{2 | \Omega |}
$ since $|\eta_1|=|\eta_2|=1$.  Moreover, in view of \eqref{BA1}, we have  the estimate
$$\norm{(H_j-\lambda)^{-1}}_{\mathcal B(L^2(\Omega))} ={1 \over \textrm{dist}(\lambda,\sigma(H_j)) } \leq {1 \over | \im \lambda |}= {1\over 2\tau},\quad j=1,2.$$
In addition, in light of \eqref{mol2}, we get
\[\norm{\psi_j}_{W^{2,\infty}(\Omega)}\leq C\delta=C\tau^{{1\over3}},\quad \norm{b_j}_{W^{2,\infty}(\Omega)}\leq C\delta^2=C\tau^{{2\over3}}\]
with $C$ a generic constant depending on $\xi$, $\Omega$ and $\tilde{A_j}$, $j=1,2$. 
Putting these estimates together with \eqref{t1a}, \eqref{l1a}-\eqref{l1b} and \eqref{mol1} , we deduce that
\[ \frac{S_1-S_2}{\sqrt{\lambda}}=2\int_\Omega \eta_2\cdot (A_1-A_2)e^{{\rm i}\sqrt{\lambda}({\eta}_1-{\eta}_2)\cdot x} b_2 e^{{\rm i}(\psi_1(x)-\psi_2(x))}dx+\underset{\tau\to+\infty}{\mathcal O}\left(\tau^{-{1\over3}}\right).\]
Combining this with \eqref{BA2}, \eqref{ll2a}-\eqref{ll2b} and applying the dominate convergence theorem we deduce \eqref{l2a}.

\end{proof}
Using similar arguments and assuming that the magnetic potentials are known ($A_1=A_2$), we obtain our second  formula involving the electric potentials $V_1,V_2$.
\begin{proposition}
\label{l3} 
 Assume that $A_1=A_2$. Fix $\xi\in\R^n\setminus \{0\}$  and $\eta\in \mathbb S^{n-1}$ such that $\eta\cdot\xi=0$. Let  $\lambda$, $\eta_1$ and $\eta_2$ be defined by \eqref{BA1} and $b_2=1$. Then,  we have
\bel{l3a}
\lim_{\tau \to+\infty} S_1-S_2=\int_\Omega (V_1-V_2)e^{-{\rm i}\xi\cdot x}  dx. 
\ee
\end{proposition}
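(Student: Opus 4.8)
The plan is to specialize the representation formulas \eqref{l1a}--\eqref{l1b} to the present situation and read off the limit of $S_1-S_2$ directly, without the division by $\sqrt{\lambda}$ used in Proposition \ref{l2}. First I would record the simplifications forced by the hypotheses. Since $A_1=A_2$ on $\Omega$, the extensions satisfy $\tilde A_1=\tilde A_2$, so the mollifications coincide, $A_{1,\sharp}=A_{2,\sharp}$, whence $\psi_1=\psi_2$; together with the choice $b_2=1$ this gives $\nabla b_2=0$ and $\Delta b_2=0$. Inspecting the definitions of the $q_{jk}$ then yields $q_{12}=q_{22}$ and $q_{11}-q_{21}=V_1-V_2$. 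Consequently the two leading integrals of order $\sqrt{\lambda}$ in \eqref{l1a} and \eqref{l1b}, carrying the factor $2\sqrt{\lambda}\,\eta_2\cdot(A_j-A_{2,\sharp})$, are identical and cancel exactly in $S_1-S_2$, and the boundary integrals over $\Gamma$ coincide as well (their integrands depend on $A_j$ only through the common boundary value, cf. \eqref{t1a}), so they cancel too.

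After these cancellations the only surviving explicit contribution comes from the terms $\int_\Omega(V_1-q_{12})(\cdots)\,dx$ of \eqref{l1a} and $\int_\Omega(V_2-q_{22})(\cdots)\,dx$ of \eqref{l1b}; using $q_{12}=q_{22}$, $b_2=1$ and $e^{{\rm i}(\psi_1-\psi_2)}=1$ this is exactly $\int_\Omega(V_1-V_2)\,e^{{\rm i}\sqrt{\lambda}(\eta_1-\eta_2)\cdot x}\,dx$. By \eqref{BA2} we have $\sqrt{\lambda}(\eta_1-\eta_2)\to-\xi$ while $\im\sqrt{\lambda}\,\eta_1,\ \im\sqrt{\lambda}\,\eta_2$ stay bounded, so the integrand is uniformly bounded on $\Omega$ and converges pointwise to $(V_1-V_2)e^{-{\rm i}\xi\cdot x}$; dominated convergence then produces the right-hand side of \eqref{l3a}. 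It remains to show that the two resolvent integrals of \eqref{l1a}--\eqref{l1b} have difference tending to $0$.

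This last point is where I expect the real work, and it is the reason Proposition \ref{l3} is not an immediate consequence of the reasoning behind Proposition \ref{l2}: here there is no division by $\sqrt{\lambda}$, and each resolvent integral is individually only $\OO(\tau^{1/3})$. Writing the resolvent integrands as $(R_jF_j)G_j$ with $R_j=(H_j-\lambda)^{-1}$, $F_j=\big(2\sqrt{\lambda}\,\eta_1\cdot(A_j-A_{1,\sharp})+q_{j1}\big)\Phi_1$ and $G_j=\big(2\sqrt{\lambda}\,\eta_2\cdot(A_j-A_{2,\sharp})+V_j-q_{j2}\big)e^{-{\rm i}\sqrt{\lambda}\eta_2\cdot x}e^{-{\rm i}\psi_2}$, the hypotheses hide a strong cancellation: by the identities above $F_1-F_2=(V_1-V_2)\Phi_1$ and $G_1-G_2=(V_1-V_2)e^{-{\rm i}\sqrt{\lambda}\eta_2\cdot x}e^{-{\rm i}\psi_2}$ are both $\OO(1)$ in $L^2(\Omega)$, whereas $F_j,G_j$ are $\OO(\tau^{2/3})$ by \eqref{mol1}--\eqref{mol2} and $\norm{\psi_2}_{W^{2,\infty}(\Omega)}=\OO(\tau^{1/3})$. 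I would therefore difference \emph{before} estimating, using the telescoping identity
\begin{align*}
(R_1F_1)G_1-(R_2F_2)G_2=\big(R_1(F_1-F_2)\big)G_1+\big((R_1-R_2)F_2\big)G_1+(R_2F_2)(G_1-G_2).
\end{align*}
By the resolvent bound $\norm{R_j}_{\mathcal B(L^2(\Omega))}\leq(2\tau)^{-1}$ from Proposition \ref{l2}, the first and third terms are $\OO(\tau^{-1})\OO(\tau^{2/3})=\OO(\tau^{-1/3})$. For the middle term I would invoke the resolvent identity together with the crucial fact $H_2-H_1=V_2-V_1$ (valid precisely because $A_1=A_2$), giving $R_1-R_2=R_1(V_2-V_1)R_2$ and hence $\norm{(R_1-R_2)F_2}_{L^2}\leq(2\tau)^{-2}\norm{V_1-V_2}_{L^\infty}\norm{F_2}_{L^2}=\OO(\tau^{-4/3})$, so that term is $\OO(\tau^{-2/3})$. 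Integrating over $\Omega$ and collecting these bounds shows the resolvent difference vanishes as $\tau\to+\infty$, which finishes the proof. The main obstacle is thus not a single delicate estimate but the realization that one must extract the common factor $V_1-V_2$ from $F_1-F_2$, from $G_1-G_2$, and from $H_2-H_1$, and gain the extra power of $\tau^{-1}$ through the resolvent identity, rather than bounding the two resolvent terms separately as in Proposition \ref{l2}.
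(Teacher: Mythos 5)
Your proposal is correct and follows essentially the same route as the paper: the same algebraic identities forced by $A_1=A_2$ (namely $A_{1,\sharp}=A_{2,\sharp}$, $q_{12}=q_{22}$ and $q_{11}-V_1=q_{21}-V_2$) reduce $S_1-S_2$ to the Fourier-type integral plus resolvent remainders, and the decisive extra power of $\tau^{-1}$ for the resolvent contribution is extracted from $H_2-H_1=V_2-V_1$ exactly as in the paper's proof, which uses the Neumann series for $(H_1-\lambda)^{-1}-(H_2-\lambda)^{-1}$ where you use the first-order resolvent identity, and groups the terms as in \eqref{l3b} where you telescope; these differences are cosmetic. One small correction: $\psi_1\neq\psi_2$ even though $A_{1,\sharp}=A_{2,\sharp}$, since $\eta_1\neq\eta_2$; one only has $\psi_1(x)-\psi_2(x)\to 0$ as $\tau\to+\infty$ (by the argument of Lemma \ref{ll2} with $A=0$), which the dominated-convergence step you already invoke absorbs, so the conclusion is unaffected.
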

\begin{proof} Note that for $A_1=A_2$ we have $q_{11}-V_1=q_{21}-V_2$,  $q_{12}=q_{22}$, $A_{1,\sharp}=A_{2,\sharp}$. Therefore, we deduce that \eqref{l1a}-\eqref{l1b} imply
\bel{l3b}\begin{aligned}S_1-S_2=&\int_\Omega (V_1-V_2)e^{{\rm i}\sqrt{\lambda}({\eta}_1-{\eta}_2)\cdot x}  e^{{\rm i}(\psi_1(x)-\psi_2(x))}dx-\int_\Omega \left[\lambda\left((H_1-\lambda)^{-1}-(H_2-\lambda)^{-1}\right)Q_1\right]Q_2dx\\
\ & -\int_\Omega \left[\sqrt{\lambda}(H_1-\lambda)^{-1}Q_1\right]V_1e^{-{\rm i}\sqrt{\lambda}{\eta}_2 \cdot x}e^{-{\rm i}\psi_2}dx-\int_\Omega \left[\sqrt{\lambda}(H_1-\lambda)^{-1}V_1\Phi_1\right]Q_2dx\\
\ &-\int_\Omega \left[(H_1-\lambda)^{-1}V_1\Phi_1\right]V_1e^{-{\rm i}\sqrt{\lambda}{\eta}_2 \cdot x}e^{-{\rm i}\psi_2}dx
+\int_\Omega \left[\sqrt{\lambda}(H_2-\lambda)^{-1}Q_1\right]V_2e^{-{\rm i}\sqrt{\lambda}{\eta}_2 \cdot x}e^{-{\rm i}\psi_2}dx\\
\ &+\int_\Omega \left[\sqrt{\lambda}(H_2-\lambda)^{-1}V_2\Phi_1\right]Q_2dx+\int_\Omega \left[(H_2-\lambda)^{-1}V_2\Phi_1\right]V_2e^{-{\rm i}\sqrt{\lambda}{\eta}_2 \cdot x}e^{-{\rm i}\psi_2}dx,\end{aligned}\ee
where 
\[Q_1=2\eta_1\cdot (A_1-A_{1,\sharp})\Phi_1+{(q_{11}-V_1)\Phi_1\over\sqrt{\lambda}},\quad Q_2=\left(2\eta_2\cdot (A_1-A_{1,\sharp})-{q_{12}\over\sqrt{\lambda}}\right)e^{-{\rm i}\sqrt{\lambda}{\eta}_2 \cdot x}e^{-{\rm i}\psi_2}.\]
On the other hand, since $H_2-\lambda=H_1-\lambda-(V_1-V_2)$, for $\tau$ sufficiently large we have
	\[\begin{aligned}(H_1-\lambda)^{-1}-(H_2-\lambda)^{-1}&=(H_1-\lambda)^{-1}\left(\textrm{Id}-\left(\textrm{Id}-(V_1-V_2)(H_1-\lambda)^{-1}\right)^{-1}\right)\\
	\ &=-(H_1-\lambda)^{-1}\sum_{k=1}^\infty\left((V_1-V_2)(H_1-\lambda)^{-1}\right)^k.\end{aligned}\]
Combining this with the fact that $\im\lambda=2\tau$, $\abs{\lambda}\leq |\tau^2-1|+2\tau$, and the fact that
\[\norm{(H_1-\lambda)^{-1}}_{\mathcal B(L^2(\Omega))}+\norm{(V_1-V_2)(H_1-\lambda)^{-1}}_{\mathcal B(L^2(\Omega))}\leq {C\over|\im\lambda|}={C\over2\tau}\]
with $C$ depending only on  $V_1$, $V_2$ and $\Omega$, we deduce that
\bel{l3c}\sup_{\tau>|\xi|+1}\norm{\lambda\left((H_1-\lambda)^{-1}-(H_2-\lambda)^{-1}\right)}_{\mathcal B(L^2(\Omega))}<\infty.\ee
In addition, \eqref{mol1}-\eqref{mol2} imply 
\[\lim_{\tau\to+\infty}\norm{Q_1}_{L^\infty(\Omega)}=\lim_{\tau\to+\infty}\norm{Q_2}_{L^\infty(\Omega)}=0.\]
Putting this result together  with \eqref{BA2}, \eqref{l3b}-\eqref{l3c}, we obtain 
\[\limsup_{\tau \to+\infty} \abs{(S_1-S_2)-\int_\Omega (V_1-V_2)e^{{\rm i}\sqrt{\lambda}({\eta}_1-{\eta}_2)\cdot x}  e^{{\rm i}(\psi_1(x)-\psi_2(x))}dx}=0.\]
On the other hand, repeating the arguments of Lemma \ref{ll2}, we find
\[\lim_{\tau\to+\infty}\psi_1(x)-\psi_2(x)=\psi(x)=\int_{-\infty}^{0} \eta\cdot A(x+s\eta)ds=0\]
since $A_1=A_2$. Thus, applying the  dominate convergence theorem we obtain
\[\lim_{\tau\to+\infty}\int_\Omega (V_1-V_2)e^{{\rm i}\sqrt{\lambda}({\eta}_1-{\eta}_2)\cdot x}  e^{{\rm i}(\psi_1(x)-\psi_2(x))}dx=\int_\Omega (V_1-V_2)e^{-{\rm i}x\cdot\xi}dx\]
and we deduce \eqref{l3a}.\end{proof}

Armed with formulas \eqref{l2a}-\eqref{l3a}, in the next section we will complete the proof of Theorem \ref{thm-1}. 
\section{Proof of the main result}
\label{sec:Proof-main}
This section is devoted to the proof of our main result.  In all this section, for $j= 1$ and $j= 2$, we consider two magnetic potentials $A_j$ and electric potentials $V_{j}$ satisfying the assumptions of Theorem \ref{thm-1} and we denote by $H_j$ the associated operators defined by \eqref{eq:Def-A-theta} for $A=A_j$ and $V=V_j$. Let $(\lambda_{j,k},\phi_{j,k})_{k \geq 1}$ be a sequence of eigenvalues and eigenfunctions of $H_j$. In order to prove Theorem \ref{thm-1}, in light of \eqref{l2a}-\eqref{l3a}, we prove first that the condition
\bel{l4b}\lim_{\tau\to+\infty}{S_1(\lambda(\tau),\eta_1(\tau),\eta_2(\tau))-S_2(\lambda(\tau),\eta_1(\tau),\eta_2(\tau))\over\sqrt{\lambda(\tau)}}=0\ee
implies $dA_1=dA_2$. Then, we show that for $A_1=A_2$ the condition 
\bel{l4c}\lim_{\tau\to+\infty}S_1(\lambda(\tau),\eta_1(\tau),\eta_2(\tau))-S_2(\lambda(\tau),\eta_1(\tau),\eta_2(\tau))=0\ee
implies $V_1=V_2$. Finally, we complete the proof by proving that condition \eqref{t1a}-\eqref{tt2a} imply \eqref{l4b}-\eqref{l4c}.

We start by proving that \eqref{l4b} implies $dA_1=dA_2$.
\begin{lemma}
\label{l4} Let $\eta_1(\tau)$, $\eta_2(\tau)$ and $\lambda(\tau)$ be fixed by \eqref{BA1} and $b_2$ be defined by \eqref{b2}. Assume that \eqref{l4b} is fulfilled.
Then, we have $dA_1=dA_2$.\end{lemma}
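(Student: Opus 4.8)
The plan is to feed the hypothesis \eqref{l4b} into the representation formula \eqref{l2a} of Proposition \ref{l2}. Since \eqref{l4b} forces the right-hand side of \eqref{l2a} to vanish, I obtain, for every $\xi\in\R^n\setminus\{0\}$, every $\eta\in\mathbb S^{n-1}$ with $\eta\cdot\xi=0$, and every $y\in\mathbb S^{n-1}\cap\eta^\bot$ entering the definition \eqref{b2} of $b$, the identity
\[\int_\Omega \eta\cdot(A_1-A_2)\,e^{-{\rm i}\xi\cdot x}\,b\,e^{{\rm i}\psi(x)}\,dx=0.\]
Here $A:=A_2-A_1$, extended by $0$ outside $\Omega$, is a compactly supported $\mathcal C^1$ field thanks to \eqref{t1a}, and $\eta\cdot(A_1-A_2)=-\,\eta\cdot A$. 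The goal is to decode this single scalar identity, valid over a large family of parameters, into the pointwise vanishing of the $2$-form $d(A_2-A_1)$.

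First I would make the gauge amplitude explicit. Writing $\Psi(x):=\int_\R\eta\cdot A(x+s\eta)\,ds$ for the X-ray transform of $\eta\cdot A$ along $\eta$, a direct differentiation in \eqref{b2} gives $b=-{\rm i}\,e^{-{\rm i}\Psi}\big[(y\cdot\nabla\Psi)+(y\cdot\xi)\big]$, hence $b\,e^{{\rm i}\psi}=-{\rm i}\,e^{-{\rm i}\theta}\big[(y\cdot\nabla\Psi)+(y\cdot\xi)\big]$ with $\theta:=\Psi-\psi=\int_0^{+\infty}\eta\cdot A(x+s\eta)\,ds$. The point of this rewriting is that $\partial_\eta\theta=-\,\eta\cdot A$, so that $(\eta\cdot A)\,e^{-{\rm i}\theta}=-{\rm i}\,\partial_\eta(e^{-{\rm i}\theta})$ is a total derivative in the $\eta$-direction. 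Slicing the integral along $\R\eta\oplus\eta^\bot$ (all remaining factors depend only on the transverse variable $x'\in\eta^\bot$, because $\xi\cdot\eta=0$, $y\cdot\eta=0$ and $\Psi$ is $\eta$-invariant) and integrating the total derivative in $t=x\cdot\eta$, the limits $e^{-{\rm i}\theta}\to1$ as $t\to+\infty$ and $e^{-{\rm i}\theta}\to e^{-{\rm i}\Psi}$ as $t\to-\infty$ produce the factor $1-e^{-{\rm i}\Psi(x')}$.

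The crux, and the step I expect to be the main obstacle, is that the resulting integrand still carries the nonlinear factor $1-e^{-{\rm i}\Psi}$, and one must show these nonlinear contributions cancel. This is precisely what the special amplitude \eqref{b2} was designed for: using the primitive $h(s)=s-{\rm i}e^{-{\rm i}s}$ of $1-e^{-{\rm i}s}$, I would write $(1-e^{-{\rm i}\Psi})(y\cdot\nabla\Psi)=y\cdot\nabla\big(h(\Psi)\big)$ and integrate by parts over $\eta^\bot$ (no boundary term, since $h(\Psi)+{\rm i}$ is compactly supported). As $\xi\cdot\eta=0$, the transverse gradient of $e^{-{\rm i}\xi\cdot x'}$ brings down $-{\rm i}(y\cdot\xi)$, and upon collecting the two pieces the exponentials $e^{-{\rm i}\Psi}$ cancel identically, leaving only
\[(y\cdot\xi)\int_{\eta^\bot}e^{-{\rm i}\xi\cdot x'}\,\Psi(x')\,dx'=0.\]
By the Fourier slice theorem $\int_{\eta^\bot}e^{-{\rm i}\xi\cdot x'}\Psi(x')\,dx'=\widehat{\eta\cdot A}(\xi)$ for $\xi\perp\eta$, so this reads $(y\cdot\xi)\,\widehat{\eta\cdot A}(\xi)=0$.

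Finally I would extract the magnetic field. Fixing $\xi\neq0$ and $\eta\in\mathbb S^{n-1}\cap\xi^\bot$, the choice $y=\xi/|\xi|\in\mathbb S^{n-1}\cap\eta^\bot$ gives $y\cdot\xi=|\xi|\neq0$, whence $\widehat{\eta\cdot A}(\xi)=0$ for every $\eta\perp\xi$. Thus $\widehat A(\xi)$ is orthogonal to $\xi^\bot$, i.e. $\widehat A(\xi)\parallel\xi$ for all $\xi\neq0$. Since the Fourier transform of $dA$ is, up to a nonzero constant, $\xi\wedge\widehat A(\xi)$, this forces $\widehat{dA}\equiv0$ and hence $dA=0$; recalling $A=A_2-A_1$, this is exactly $dA_1=dA_2$.
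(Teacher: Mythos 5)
Your proof is correct and follows essentially the same route as the paper: plug \eqref{l4b} into Proposition \ref{l2}, integrate along $\eta$ to produce the factor $1-e^{-{\rm i}\Psi}$, use the special structure of $b$ and an integration by parts over $\eta^\bot$ to cancel the nonlinear exponentials, and conclude via the Fourier slice theorem that $\widehat{dA}=0$. The only differences are organizational: the paper moves $y\cdot\nabla$ onto $e^{{\rm i}\Psi}-1$ directly and works with the specific choices $\eta=(\xi_je_i-\xi_ie_j)/\sqrt{\xi_i^2+\xi_j^2}$, $y=(\xi_ie_i+\xi_je_j)/\sqrt{\xi_i^2+\xi_j^2}$ to get $\mathcal F[\partial_{x_j}a_i-\partial_{x_i}a_j]=0$ pairwise, whereas you use the primitive $h(s)=s-{\rm i}e^{-{\rm i}s}$ and the choice $y=\xi/|\xi|$ to get $\eta\cdot\widehat A(\xi)=0$ for all $\eta\perp\xi$ — the same linearization in equivalent packaging.
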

\begin{proof}
Combining \eqref{l4b} with \eqref{l2a} we deduce that for all $\xi\in\R^n\setminus\{0\}$, $\eta\in\mathbb S^{n-1}$, satisfying $\eta\cdot\xi=0$, 
we get
\[\int_\Omega \eta\cdot(A_2-A_1)e^{-{\rm i}\xi\cdot x} b(x) e^{{\rm i}\psi(x)}dx=0.\]
Here $b$ takes the form
\[b(x)=e^{{\rm i}x\cdot\xi}y\cdot\nabla \left[\exp\left(-{\rm i}\int_\R\eta\cdot A(x+s\eta)ds\right)e^{-{\rm i}x\cdot\xi}\right]\]
with $y\in \mathbb S^{n-1}\cap\eta^\bot$. Then, applying  Fubini's theorem, we obtain
\[0=\int_{\R^n}\eta\cdot A(x)e^{-{\rm i}\xi\cdot x} b(x) e^{{\rm i}\psi(x)}dx=\int_{\eta^\bot}\int_\R \eta\cdot A(x'+t\eta)e^{{\rm i}\psi(x'+t\eta)}b(x')e^{-{\rm i}\xi\cdot x'}dtdx'.\]
Here we use the fact that $b(x)=b(x-(x\cdot\eta)\eta)$ and $\xi\cdot\eta=0$. On the other hand, for all $x'\in\eta^\bot$ and $t\in\R$, we have
\[\eta\cdot A(x'+t\eta)e^{{\rm i}\psi(x'+t\eta)}=\eta\cdot A(x'+t\eta)\exp\left({\rm i}\int_{-\infty}^t\eta\cdot A(x'+s\eta)ds\right)=-{\rm i}\partial_t \exp\left({\rm i}\int_{-\infty}^t\eta\cdot A(x'+s\eta)ds\right).\]
Therefore, we find
\[\begin{aligned}\int_{\R^n}\eta\cdot A(x)e^{-{\rm i}\xi\cdot x} b(x) e^{{\rm i}\psi(x)}dx&=-{\rm i}\int_{\eta^\bot}\left[\int_\R \partial_t \exp\left({\rm i}\int_{-\infty}^t\eta\cdot A(x'+s\eta)ds\right)dt \right]b(x')e^{-{\rm i}\xi\cdot x'}dx'\\
\ &=-{\rm i}\int_{\eta^\bot}  \left[\exp\left({\rm i}\int_\R\eta\cdot A(x'+s\eta)ds\right)-1\right] b(x')e^{-{\rm i}\xi\cdot x'}dx'.\end{aligned}\]
It follows
\bel{t1b}\int_{\eta^\bot}  \left[\exp\left({\rm i}\int_\R\eta\cdot A(x'+s\eta)ds\right)-1\right] b(x')e^{-{\rm i}\xi\cdot x'}dx'=0.\ee
We fix $i,j\in\{1,\ldots,n\}$ such that $i< j$ and we assume that $\xi \in\{\xi=(\xi_1,\ldots,\xi_n):\ \xi_i\neq0\}$. We can choose $\eta={\xi_j e_i-\xi_ie_j\over \sqrt{\xi_i^2+\xi_j^2}}$ and   $y={\xi_i e_i+\xi_je_j\over \sqrt{\xi_i^2+\xi_j^2}}\in\eta^\bot$. Here $(e_1,\ldots,e_n)$ is the canonical  basis of $\R^n$ defined by $e_1=(1,0,\ldots,0),\ldots, e_n=(0,\ldots,0,1)$.  Then, \eqref{t1b} implies
\[\int_{\eta^\bot}  \left[\exp\left({\rm i}\int_\R\eta\cdot A(x'+s\eta)ds\right)-1\right]y\cdot\nabla \left[\exp\left(-{\rm i}\int_\R\eta\cdot A(x'+s\eta)ds\right)e^{-ix'\cdot\xi}\right]dx'=0.\]
Integrating by parts we get
\[{-{\rm i}\over\sqrt{\xi_i^2+\xi_j^2}}\cdot\int_{\R^n}(\xi_jy\cdot\nabla a_i(x)-\xi_iy\cdot\nabla a_j(x))e^{-{\rm i}x\cdot\xi}dx=-{\rm i}\int_{\eta^\bot} \left( \int_\R\eta\cdot y\cdot\nabla A(x'+s\eta)ds\right)e^{-{\rm i}x'\cdot\xi}dx'=0\]
with $A=(a_1,\ldots,a_n)$. Integrating again by parts, we find
\[\begin{aligned}\int_{\R^n}(\xi_ja_i-\xi_ia_j)e^{-{\rm i}x\cdot\xi}dx&={y\cdot\xi\over\sqrt{\xi_i^2+\xi_j^2}}\int_{\R^n}(\xi_ja_i-\xi_ia_j)e^{-{\rm i}x\cdot\xi}dx\\
\ &={-{\rm i}\over\sqrt{\xi_i^2+\xi_j^2}}\cdot\int_{\R^n}(\xi_jy\cdot\nabla a_i(x)-\xi_iy\cdot\nabla a_j(x))e^{-{\rm i}x\cdot\xi}dx=0\end{aligned}\]
and it follows that  for all  $\xi \in\{\xi=(\xi_1,\ldots,\xi_n):\ \xi_i\neq0\}$ we have $\mathcal F[\partial_{x_j}a_i-\partial_{x_i}a_j](\xi)=0$.  On the other hand, since $\partial_{x_j}a_i-\partial_{x_i}a_j$ is compactly supported, $\mathcal F(\partial_{x_j}a_i-\partial_{x_i}a_j)(\xi)$ is continuous in $\xi\in\R^n$ and it follows $\mathcal F(\partial_{x_j}a_i-\partial_{x_i}a_j)=0$ on $\R^n$.  From this last result, we deduce that $\partial_{x_j}a_i-\partial_{x_i}a_j=0$  which implies that $dA_1=dA_2$. \end{proof}

Now assuming that $A_1=A_2$, we show in the next lemma that \eqref{l4c} implies $V_1=V_2$.
\begin{lemma}
\label{l66} Let $\eta_1(\tau)$, $\eta_2(\tau)$ and $\lambda(\tau)$ be fixed by \eqref{BA1} and $b_2=1$. Assume that $A_1=A_2$ and \eqref{l4c} is fulfilled. Then, we have $V_1=V_2$.\end{lemma}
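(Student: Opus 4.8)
The plan is to read off $V_1=V_2$ directly from the representation formula \eqref{l3a} of Proposition \ref{l3}, which does all the analytic work (the asymptotics of the resolvent terms and the dominated-convergence passage $\tau\to+\infty$) under exactly the present hypotheses $A_1=A_2$ and $b_2=1$. First I would invoke Proposition \ref{l3}: for every $\xi\in\R^n\setminus\{0\}$ and $\eta\in\mathbb S^{n-1}$ with $\eta\cdot\xi=0$, and with $\eta_1(\tau),\eta_2(\tau),\lambda(\tau)$ given by \eqref{BA1},
\[\lim_{\tau\to+\infty}\big(S_1(\lambda(\tau),\eta_1(\tau),\eta_2(\tau))-S_2(\lambda(\tau),\eta_1(\tau),\eta_2(\tau))\big)=\int_\Omega (V_1-V_2)e^{-{\rm i}\xi\cdot x}\,dx.\]
Comparing this with the standing assumption \eqref{l4c}, which asserts that the very same limit is zero, I obtain
\[\int_\Omega (V_1-V_2)(x)\,e^{-{\rm i}\xi\cdot x}\,dx=0,\qquad \xi\in\R^n\setminus\{0\}.\]

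The second step is to recognize the left-hand side as the Fourier transform of $V_1-V_2$. Extending $V_1-V_2$ by $0$ outside $\Omega$ produces a function in $L^1(\R^n)$, since $\Omega$ is bounded and $V_j\in L^\infty(\Omega,\R)$; hence $\mathcal F[V_1-V_2]$ is well defined and continuous on all of $\R^n$, and the identity above reads $\mathcal F[V_1-V_2](\xi)=0$ for every $\xi\neq 0$. Exactly as at the end of the proof of Lemma \ref{l4}, I would then use this continuity to fill in the single missing frequency: letting $\xi\to0$ gives $\mathcal F[V_1-V_2](0)=0$ as well, so $\mathcal F[V_1-V_2]\equiv0$ on $\R^n$. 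By injectivity of the Fourier transform on $L^1$, it follows that $V_1-V_2=0$, i.e.\ $V_1=V_2$.

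In this lemma there is essentially no obstacle left: all the substance lies in Proposition \ref{l3}, which we are entitled to assume, so the statement is a clean corollary of \eqref{l3a} and \eqref{l4c}. The only point deserving a word of care is that Proposition \ref{l3} supplies the vanishing of $\mathcal F[V_1-V_2]$ only away from the origin (because $\xi\in\R^n\setminus\{0\}$ and $\eta\cdot\xi=0$ are required to define $\eta_1,\eta_2$), and this gap at $\xi=0$ is closed at once by the continuity of the Fourier transform of the compactly supported, bounded function $V_1-V_2$. Compared with Lemma \ref{l4}, no integration by parts or transport-equation manipulation is needed here, since the electric potential already appears linearly and undifferentiated in \eqref{l3a}.
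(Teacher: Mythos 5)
Your proposal is correct and follows essentially the same route as the paper: the paper's proof of Lemma \ref{l66} likewise just combines \eqref{l3a} with \eqref{l4c} to conclude that the Fourier transform of $V_1-V_2$ (extended by zero) vanishes, hence $V_1=V_2$. Your additional remark on filling in the frequency $\xi=0$ by continuity of the Fourier transform of a compactly supported $L^1$ function is a welcome (if minor) point of care that the paper leaves implicit.
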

\begin{proof} Fix $\xi\in\R^n\setminus\{0\}$ and choose $\eta\in\mathbb S^{n-1}\cap\xi^\bot$. Fix also $b=1$.
Thus, combining \eqref{l3a} and \eqref{l4c}, we find
\[\int_{\R^n} V(x)e^{-{\rm i}x\cdot\xi}dx=0\]
with $V=V_1-V_2$ extended by $0$ outside of $\Omega$. It follows that $V_1=V_2$.\end{proof}

According to Lemma \ref{l4}, \ref{l66},  the proof of Theorem \ref{thm-1} will be completed if we  show that conditions \eqref{tt2a}  imply conditions  \eqref{l4b}, \eqref{l4c}. For this purpose, we adapt the approach of \cite{KKS} to magnetic Schr\"odinger operators.  Let $f\in H^{1\over2}(\Gamma)$ being fixed, with the notations of Lemmas \ref{lem:Resolution} and \ref{lem:v-lambda-mu}, we denote by $v_{j,\lambda,\mu} := u_{j,\lambda} - u_{j,\mu}$ the solution of \eqref{eq:v-lambda} where $V$ is replaced by $V_{j}$ and $A$ by $A_j$. We fix also $h_{j,k}  := {\partial_\nu\phi_{j,k}}_{|\Gamma}$ $\alpha_{j,k} := \langle f,h_{j,k}\rangle$.
Recalling that in Lemma \ref{lem:z-mu} we have set $z_{\mu} = u_{1,\mu} - u_{2,\mu}$, in a similar way to \cite{KKS},  writing the above identity for $j=1$ and $j=2$, applying \eqref{t1a} and then subtracting the resulting equations, we end up with a new relation, namely
\begin{equation}\label{dif}
\begin{aligned}{(\partial_\nu +{\rm i}A_1\cdot\nu) u_{1,\lambda}}_{|\Gamma}-{(\partial_\nu +{\rm i}A_2\cdot\nu) u_{2,\lambda}}_{|\Gamma}&={\rm i}(A_1-A_2)\cdot\nu f+\partial_\nu u_{1,\lambda}  - 
\partial_\nu u_{2,\lambda} \\
\  &= 
\partial_\nu z_{\mu} 
+ \partial_\nu v_{1,\lambda,\mu} - \partial_\nu v_{2,\lambda,\mu} .\end{aligned}
\end{equation}
Now let us set 
\[ F_j(\lambda,\mu,f):={\partial_\nu  v_{j,\lambda,\mu}}_{|\Gamma},\quad j=1,2.\]
According to \eqref{eq:v-Normal-Deriv}, we have
\begin{equation}\label{eq:Def-F-m}
F(\lambda,\mu,f) := F_1(\lambda,\mu,f)-F_2(\lambda,\mu,f)=\sum_{k =1}^{+\infty} \left[{(\mu - \lambda)\alpha_{1,k}  \over (\lambda - \lambda_{1,k})(\mu - \lambda_{1,k})} \, h_{1,k}-{(\mu - \lambda)\alpha_{2,k}  \over (\lambda - \lambda_{2,k})(\mu - \lambda_{2,k})} \, h_{2,k}\right].
\end{equation}
Consider the following intermediate results.
\begin{lemma}\label{lem-6}
Let  $\eta_1,\eta_2,\lambda$ be given by \eqref{BA1}. Consider $\Phi_j$, $j=1,2$, with $\Phi_1$ introduced in the previous section and $\Phi_2=  e^{-{\rm i}\sqrt{\lambda} {\eta}_2 \cdot x} b_2e^{-{\rm i}\psi_2}$, where $b_2$ is defined by \eqref{b2} or $b_2=1$. Then, we have
\bel{ll6a}\sup_{\tau>1}\sum_{k=1}^\infty \abs{{\left\langle \Phi_1,h_{j,k}\right\rangle \over \lambda_{j,k}-\lambda }}^2<\infty,\quad  \sup_{\tau>1}\sum_{k=1}^\infty \abs{{\left\langle \overline{\Phi_2},h_{2,k}\right\rangle \over \lambda_{2,k}-\lambda }}^2<\infty,\ j=1,2.\ee\end{lemma}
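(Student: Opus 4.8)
The plan is to reduce each of the two series in \eqref{ll6a} to the squared $L^2(\Omega)$-norm of the solution of a boundary value problem of the form \eqref{eq:lambda}, and then to bound those norms uniformly in $\tau$ using the explicit representation formulas already at hand together with the resolvent estimate $\norm{(H_j-\lambda)^{-1}}_{\B(L^2(\Omega))}\leq (2\tau)^{-1}$. Indeed, since $\Phi_1,\overline{\Phi_2}\in H^2(\Omega)$ their traces lie in $H^{1/2}(\Gamma)$, so Lemma~\ref{lem:Resolution} applies: with data $f=\Phi_1{}_{|\Gamma}$ and operator $H_j$, and recalling $\alpha_{j,k}=\langle\Phi_1,h_{j,k}\rangle$, one gets
\[\sum_{k=1}^\infty\abs{\frac{\langle\Phi_1,h_{j,k}\rangle}{\lambda_{j,k}-\lambda}}^2=\norm{u_{j,\lambda}}_{L^2(\Omega)}^2,\]
where $u_{j,\lambda}$ solves \eqref{eq:lambda} with data $\Phi_1{}_{|\Gamma}$ for $H_j$; likewise, with $f=\overline{\Phi_2}{}_{|\Gamma}$ and $H_2$, the second series equals $\norm{\tilde u_\lambda}_{L^2(\Omega)}^2$ for the associated solution $\tilde u_\lambda$. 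Thus it suffices to prove $\sup_{\tau>1}\norm{u_{j,\lambda}}_{L^2(\Omega)}<\infty$ and $\sup_{\tau>1}\norm{\tilde u_\lambda}_{L^2(\Omega)}<\infty$.

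For the first series I would simply reuse the representations \eqref{l1d}-\eqref{l1i} obtained in the proof of Proposition~\ref{l1}, namely $u_{j,\lambda}=\Phi_1-(H_j-\lambda)^{-1}[2\sqrt{\lambda}\eta_1\cdot(A_j-A_{1,\sharp})+q_{j1}]\Phi_1$. Here $\norm{\Phi_1}_{L^2(\Omega)}\leq\abs{\Omega}^{1/2}e^{\abs{\Omega}}$ is bounded and $\abs{\sqrt{\lambda}}\sim\tau$, while $\norm{A_j-A_{1,\sharp}}_{L^\infty(\Omega)}$ stays bounded (and is $\OO(\tau^{-1/3})$ for $j=1$ by \eqref{mol1}) and $\norm{q_{j1}}_{L^\infty(\Omega)}=\OO(\tau^{1/3})$, the only growing contribution being $\Delta\psi_1$, controlled by \eqref{mol2}. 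Hence the function inside the resolvent has $L^2$-norm $\OO(\tau)$, and multiplying by $(2\tau)^{-1}$ gives an $\OO(1)$ bound, so $\norm{u_{j,\lambda}}_{L^2(\Omega)}$ is uniformly bounded.

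For the second series I would construct the analogous representation by hand. Writing $\overline{\Phi_2}=e^{{\rm i}\overline{\sqrt{\lambda}}\,\eta_2\cdot x}\,\overline{b_2}\,e^{{\rm i}\psi_2}$ and noting that $(\overline{\sqrt{\lambda}})^2=\overline{\lambda}$, I set $\tilde u_\lambda=\overline{\Phi_2}-(H_2-\lambda)^{-1}G$ with $G=(-{\rm i}\nabla+A_2)^2\overline{\Phi_2}+V_2\overline{\Phi_2}-\lambda\overline{\Phi_2}$. Expanding $G$ in powers of $\overline{\sqrt{\lambda}}$, the formally dominant contribution carries the factor $(\overline{\sqrt{\lambda}})^2-\lambda=\overline{\lambda}-\lambda=-2{\rm i}\,\im\lambda=-4{\rm i}\tau$ multiplying a bounded function, hence is only $\OO(\tau)$; the first-order-in-$\overline{\sqrt{\lambda}}$ term collapses, via the transport relations $\eta_2\cdot\nabla\psi_2=-\eta_2\cdot A_{2,\sharp}$ and $\eta_2\cdot\nabla b_2=0$, to $2\overline{\sqrt{\lambda}}\,\eta_2\cdot(A_2-A_{2,\sharp})\,\overline{b_2}e^{{\rm i}\psi_2}$, which is $\OO(\tau^{2/3})$ by \eqref{mol1}; and the remaining terms, built from $\nabla b_2,\Delta b_2,\nabla\psi_2,\Delta\psi_2$, are $\OO(\tau^{2/3})$ thanks to $\norm{b_2}_{W^{2,\infty}}\lesssim\tau^{2/3}$, $\norm{\psi_2}_{W^{2,\infty}}\lesssim\tau^{1/3}$ and \eqref{ll2a}. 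Since $\abs{e^{{\rm i}\overline{\sqrt{\lambda}}\,\eta_2\cdot x}}=e^{\eta_2\cdot x}\leq e^{\abs{\Omega}}$ on $\Omega$, this yields $\norm{G}_{L^2(\Omega)}=\OO(\tau)$, whence $\norm{\tilde u_\lambda}_{L^2(\Omega)}\leq\norm{\overline{\Phi_2}}_{L^2(\Omega)}+(2\tau)^{-1}\norm{G}_{L^2(\Omega)}=\OO(1)$.

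The main obstacle is exactly this last computation: one must check that the apparent $\OO(\tau^2)$ and $\OO(\tau)$ contributions to $G$ really cancel down to size $\OO(\tau)$, which is then absorbed by the resolvent decay $(2\tau)^{-1}$. Two mechanisms are decisive and must be verified carefully. First, conjugation turns the spectral parameter into $\overline{\lambda}$, so that the would-be leading term of order $\abs{\lambda}\sim\tau^2$ is in fact governed by $\overline{\lambda}-\lambda=-4{\rm i}\tau$. Second, the transport equations for $\psi_2$ and $b_2$ are what reduce the first-order term from $\OO(\tau)$ to the mollification error $\OO(\tau^{2/3})$; the estimates \eqref{mol1}-\eqref{mol2} on $A_{2,\sharp}-A_2$ and on the derivatives of $A_{2,\sharp}$, $\psi_2$ and $b_2$ then ensure that no surviving term of $G$ grows faster than $\tau$.
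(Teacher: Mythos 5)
Your proof is correct and follows essentially the same route as the paper: identify each series with $\norm{u_\lambda}_{L^2(\Omega)}^2$ via Lemma \ref{lem:Resolution}, represent the solution as the ansatz minus the resolvent applied to an $O(\tau)$ residual (using the transport equations to kill the would-be $O(\tau^2)$ and $O(\tau)$ terms), and conclude with the decay $\norm{(H_j-\lambda)^{-1}}_{\mathcal B(L^2(\Omega))}\leq(2\tau)^{-1}$. The only (harmless) difference is in the second series, where the paper switches to the conjugate spectral parameter via $\abs{\lambda_{2,k}-\lambda}=\abs{\lambda_{2,k}-\overline{\lambda}}$ so that $\overline{\Phi_2}$ is an approximate solution with residual $O(\tau)$, whereas you keep $\lambda$ and absorb the extra term $(\overline{\lambda}-\lambda)\overline{\Phi_2}=O(\tau)$ directly into the residual $G$.
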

\begin{proof}
We start with the first estimate of \eqref{ll6a} for $j=1$. According to Lemma \ref{lem:Resolution} the solution $u_{1,\lambda}$ of \eqref{eq1} for $f=\Phi_1$, $A=A_1$ and $V=V_1$, is given by
\[u_{1,\lambda}=\sum_{k=1}^\infty {\left\langle \Phi_1,h_{1,k}\right\rangle\over \lambda-\lambda_{1,k}}\phi_{1,k}.\]
Therefore, we have
\bel{ll6b}\norm{u_{1,\lambda}}_{L^2(\Omega)}^2=\sum_{k=1}^\infty \abs{{\left\langle \Phi_1,h_{1,k}\right\rangle \over \lambda_{1,k}-\lambda }}^2.\ee
On the other hand, in view of \eqref{l1d}, we have
\[\norm{u_{1,\lambda}}_{L^2(\Omega)}\leq \norm{\Phi_1}_{L^2(\Omega)}+\norm{\sqrt{\lambda}(H_1-\lambda)^{-1}\left[2\eta_1\cdot (A_1-A_{1,\sharp})+{q_{11}\over\sqrt{\lambda}}\right]}_{L^2(\Omega)}.\]
Here $q_{11}$ is the expression introduced in Lemma \ref{l1}.
Combining this with the fact that
\[\norm{\sqrt{\lambda}(H_1-\lambda)^{-1}}_{\mathcal B(L^2(\Omega))}\leq {|\tau+{\rm i}|\over |\im \lambda|}= {|\tau+{\rm i}|\over  2\tau}\leq 1\]
and the fact that, according to \eqref{mol1}-\eqref{mol2}, we have
\[\lim_{\tau\to+\infty}\norm{\eta_1\cdot (A_1-A_{1,\sharp})}_{L^\infty(\Omega)}=\lim_{\tau\to+\infty} \norm{{q_{11}\over\sqrt{\lambda}}}_{L^\infty(\Omega)}=0\]
 we deduce the first estimate of \eqref{ll6a} for $j=1$. In a same way, for $j=2$ using the fact that according to \eqref{mol2} we have
 \[(-{\rm i}\nabla +A_2)^2\Phi_1 + V_2\Phi_1 - \lambda \Phi_1=\underset{\tau\to+\infty}{\mathcal O}(\tau)\]
and repeating our previous arguments we deduce the first estimate \eqref{ll6a} for $j=2$. For the second estimate of \eqref{ll6a}, repeating the previous arguments we find
\[(-{\rm i}\nabla +A_2)^2\overline{\Phi_2}+ V_2\overline{\Phi_2} - \overline{\lambda}\ \overline{\Phi_2}=\overline{({\rm i}\nabla +A_2)^2\Phi_2+ V_2\Phi_2 -\lambda\Phi_2}=\underset{\tau\to+\infty}{\mathcal O}(\tau).\]
Combining this estimate with the fact that
 \[\abs{{\left\langle \overline{\Phi_2},h_{2,k}\right\rangle \over \lambda_{2,k}-\lambda }}= \abs{{\left\langle \overline{\Phi_2},h_{2,k}\right\rangle \over \lambda_{2,k}-\overline{\lambda }}}\]
since $\lambda_{2,k}\in\R$, we deduce the second estimate of \eqref{ll6a} by repeating the above arguments.\end{proof}

From now on we set 
$$\begin{aligned}G(\lambda,\mu,\Phi_1,\Phi_2)&:=\langle F(\lambda,\mu, \Phi_1), \overline{\Phi_2} \rangle\\
\ &=\sum_{k =1}^{+\infty} (\mu - \lambda)\left[{\left\langle \Phi_1,h_{1,k} \right\rangle \left\langle h_{1,k},\overline{\Phi_2}\right\rangle\over (\lambda - \lambda_{1,k})(\mu - \lambda_{1,k})} \, -{\left\langle \Phi_1,h_{2,k} \right\rangle \left\langle h_{2,k},\overline{\Phi_2}\right\rangle \over (\lambda - \lambda_{2,k})(\mu - \lambda_{2,k})} \right]. \end{aligned}$$
 Combining estimates \eqref{ll6a} with Lemma 4.3, 4.4, 4.5  of \cite{KKS}, we obtain the following.

\begin{lemma}\label{lem-0} Let the conditions of Theorem \ref{thm-1} be fulfilled and let $\eta_1,\eta_2,\lambda$ be given by \eqref{BA1}. Then, 
$G(\lambda,\mu,\Phi_1,\Phi_2)$ converge to $G_*(\lambda,\Phi_1,\Phi_2)$ as $\mu\to-\infty$ and $G_*(\lambda,\Phi_1,\Phi_2)$ converge to $0$ as $\tau\to +\infty$. Here we consider both the case $b_2$ given by \eqref{b2} and the case $b_2=1$.\end{lemma}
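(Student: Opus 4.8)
The statement bundles two limits: the inner one, $G(\lambda,\mu,\Phi_1,\Phi_2)\to G_*(\lambda,\Phi_1,\Phi_2)$ as $\mu\to-\infty$, and the outer one, $G_*\to0$ as $\tau\to+\infty$. My plan is to work throughout with the bilinear spectral series that defines $G$, that is, with $F$ written as in \eqref{eq:Def-F-m}, so that
\[G(\lambda,\mu,\Phi_1,\Phi_2)=\sum_{k\geq1}(\mu-\lambda)\Big[\tfrac{\langle\Phi_1,h_{1,k}\rangle\langle h_{1,k},\overline{\Phi_2}\rangle}{(\lambda-\lambda_{1,k})(\mu-\lambda_{1,k})}-\tfrac{\langle\Phi_1,h_{2,k}\rangle\langle h_{2,k},\overline{\Phi_2}\rangle}{(\lambda-\lambda_{2,k})(\mu-\lambda_{2,k})}\Big].\]
The only analytic inputs are the uniform $\ell^2$ bounds \eqref{ll6a} and the closeness conditions \eqref{tt2a}. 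The point to keep in mind from the outset is that \eqref{ll6a} controls $\overline{\Phi_2}$ only against $h_{2,k}$ and never against $h_{1,k}$, so every manipulation must keep $\overline{\Phi_2}$ paired either with $h_{2,k}$ or with the difference $h_{1,k}-h_{2,k}$.

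For the inner limit I would first record the identity obtained from \eqref{dif}, together with the vanishing of the magnetic boundary term ${\rm i}(A_1-A_2)\cdot\nu\,\Phi_1$ on $\Gamma$ guaranteed by \eqref{t1a}, namely \[S_1-S_2=\langle\partial_\nu z_\mu,\overline{\Phi_2}\rangle+G(\lambda,\mu,\Phi_1,\Phi_2),\] valid for every admissible $\mu$. Since $S_1-S_2$ does not depend on $\mu$ and Lemma \ref{lem:z-mu} gives $\partial_\nu z_\mu\to0$ in $L^2(\Gamma)$, this forces $G\to S_1-S_2$, so that the limit exists and $G_*(\lambda,\Phi_1,\Phi_2)=S_1-S_2$. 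Equivalently, and this is the route matching Lemma 4.3 of \cite{KKS}, one passes to the limit directly in the series: the factors $\tfrac{\mu-\lambda}{\mu-\lambda_{j,k}}$ are bounded uniformly in $k$ for $\mu$ sufficiently negative and converge to $1$, while the telescoped summands are dominated by fixed $\ell^1$ sequences built from \eqref{ll6a} and \eqref{tt2a}, so dominated convergence yields the same $G_*$, i.e. the series above with each $\tfrac{\mu-\lambda}{\mu-\lambda_{j,k}}$ replaced by $1$.

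The heart of the argument is the outer limit $G_*\to0$. Writing $P_{j,k}:=\tfrac{\langle\Phi_1,h_{j,k}\rangle}{\lambda-\lambda_{j,k}}$ and $c_{j,k}:=\langle h_{j,k},\overline{\Phi_2}\rangle$, the generic summand $P_{1,k}c_{1,k}-P_{2,k}c_{2,k}$ I would telescope, using $(P_{1,k}-P_{2,k})(\lambda-\lambda_{2,k})=\langle\Phi_1,h_{1,k}-h_{2,k}\rangle+P_{1,k}(\lambda_{1,k}-\lambda_{2,k})$, into \[P_{1,k}(c_{1,k}-c_{2,k})+\langle\Phi_1,h_{1,k}-h_{2,k}\rangle\tfrac{c_{2,k}}{\lambda-\lambda_{2,k}}+P_{1,k}(\lambda_{1,k}-\lambda_{2,k})\tfrac{c_{2,k}}{\lambda-\lambda_{2,k}}.\] Each of the three sums is controlled by Cauchy–Schwarz: the sequences $P_{1,k}$ and $\tfrac{c_{2,k}}{\lambda-\lambda_{2,k}}$ are bounded in $\ell^2$ uniformly in $\tau$ by \eqref{ll6a}, while $\sum_k\abs{c_{1,k}-c_{2,k}}^2\leq\norm{\Phi_2}_{L^2(\Gamma)}^2\sum_k\norm{h_{1,k}-h_{2,k}}_{L^2(\Gamma)}^2$, the analogous bound for $\langle\Phi_1,h_{1,k}-h_{2,k}\rangle$, and $\sup_k\abs{\lambda_{1,k}-\lambda_{2,k}}$ are all finite by \eqref{tt2a}. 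Note that every factor touching $\overline{\Phi_2}$ appears only as $\tfrac{c_{2,k}}{\lambda-\lambda_{2,k}}$ or inside $c_{1,k}-c_{2,k}=\langle h_{1,k}-h_{2,k},\overline{\Phi_2}\rangle$, so the one-sided bound \eqref{ll6a} suffices.

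These estimates give boundedness uniform in $\tau$; the actual vanishing, which is the genuine difficulty, I would extract by splitting $\sum_k=\sum_{k\leq N}+\sum_{k>N}$. Given $\varepsilon>0$, the convergence of $\sum_k\norm{h_{1,k}-h_{2,k}}_{L^2(\Gamma)}^2$ and $\abs{\lambda_{1,k}-\lambda_{2,k}}\to0$ make the tail $\sum_{k>N}$ smaller than $\varepsilon$ uniformly in $\tau$, while in the fixed head $\sum_{k\leq N}$ each factor $P_{1,k}$ and $\tfrac{c_{2,k}}{\lambda-\lambda_{2,k}}$ tends to $0$ as $\tau\to+\infty$, since $\abs{\lambda-\lambda_{j,k}}\geq\im\lambda=2\tau\to+\infty$ whereas the numerators stay bounded (both $\norm{\Phi_1}_{L^2(\Gamma)}$ and $\norm{\Phi_2}_{L^2(\Gamma)}$ being uniformly bounded by \eqref{BA2}); the finitely many head terms therefore vanish. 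This is the mechanism packaged in Lemmas 4.4 and 4.5 of \cite{KKS}, whose hypotheses are supplied by \eqref{ll6a} and \eqref{tt2a}. The main obstacle is thus structural rather than computational: neither the $j=1$ nor the $j=2$ series converges in isolation, so all cancellation must be performed on the difference, and the telescoping must be arranged so that $\overline{\Phi_2}$ is tested only against $h_{2,k}$ or against the \eqref{tt2a}-controlled difference $h_{1,k}-h_{2,k}$.
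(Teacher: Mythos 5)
Your proof is correct and follows essentially the same route as the paper: the paper disposes of this lemma in one line by invoking Lemmas 4.3--4.5 of \cite{KKS} together with the uniform bounds \eqref{ll6a}, and what you have written out (dominated convergence in $\mu$ for the inner limit, then the telescoping of $P_{1,k}c_{1,k}-P_{2,k}c_{2,k}$ so that $\overline{\Phi_2}$ is only ever tested against $h_{2,k}$ or $h_{1,k}-h_{2,k}$, followed by Cauchy--Schwarz and a head/tail splitting) is precisely the content of those cited lemmas, with \eqref{ll6a} and \eqref{tt2a} supplying their hypotheses. Your identification of $G_*$ with $S_1-S_2$ via \eqref{dif} and Lemma \ref{lem:z-mu} is also consistent with how the paper later uses the result.
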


Armed with Lemma \ref{lem-0}, we are now in position to complete the proof of Theorem \ref{thm-1}. \\
\ \\
\textbf{Proof of Theorem \ref{thm-1}.} Note first that  according  \eqref{dif}, for $M=\norm{V_1}_{L^\infty(\Omega)}+\norm{V_2}_{L^\infty(\Omega)}$, we have
\[S_1(\lambda,\eta_1,\eta_2)-S_2(\lambda,\eta_1,\eta_2)=\left\langle \partial_\nu z_{\mu},e^{{\rm i}\overline{\sqrt{\lambda}} {\eta}_2 \cdot x} \overline{b_2}e^{{\rm i}\psi_2} \right\rangle+G(\lambda,\mu,\Phi_1,\Phi_2),\quad \mu\in(-\infty, -M),\]
where $\lambda$, $\eta_1$, $\eta_2$ are fixed by \eqref{BA1}, $b_2$ is given by \eqref{b2} or $b_2=1$  and $z_{\mu} = u_{1,\mu} - u_{2,\mu}$ with $u_{j,\mu}$, $j=1,2$,  the solution of \eqref{eq:v-lambda} where $\lambda$  is replaced by $\mu$, $V$ by $V_{j}$, $A$ by $A_j$ and $f$ by $\Phi_1$.
In view of Lemma \ref{lem:z-mu} and  Lemma \ref{lem-0}, sending $\mu\to-\infty$ we get
\[S_1(\lambda,\eta_1,\eta_2)-S_2(\lambda,\eta_1,\eta_2)= G_*(\lambda,\Phi_1,\Phi_2).\]
Then, in view of Lemma \ref{lem-0}, conditions \eqref{l4b} and \eqref{l4c} are fulfilled and according to Lemma \ref{l4} we have $dA_1=dA_2$. Therefore, condition \eqref{t1a} implies that for $A=A_2-A_1$ extended by $0$ outside of $\Omega$  we have $dA=0$ on $\R^n$. Thus, there exists $p\in W^{2,\infty}(\R^n)$ given by
$$p(x)=\int_0^1x\cdot A(tx)dt$$
such that $A=\nabla p$ on $\R^n$. Since  $\R^n\setminus \Omega$ is connected, applying the fact that $A=0$ on $\R^n\setminus \Omega$, upon eventually subtracting a constant we may assume that $p_{|\R^n\setminus \Omega}=0$ which implies that $p_{|\Gamma}=0$. Now let us consider the operator $H_3=(-{\rm i}\nabla+A_1)+V_2$ acting on $L^2(\Omega)$ with Dirichlet boundary condition and let $(\lambda_{3,k},\phi_{3,k})_{k \geq 1}$ be a sequence of eigenvalues and eigenfunctions of $H_3$. Since $A_1=A_2-\nabla p$ one can check that $H_3=e^{{\rm i}p}H_2e^{-{\rm i}p}$. From this identity we deduce that 
$$ \lambda_{3,k}=\lambda_{2,k},\quad k\geq1.$$
Moreover, for all $k\geq1$  we can choose $\phi_{3,k}=e^{ip}\phi_{2,k}$ and deduce that the condition
$$\partial_\nu \phi_{3,k}=\partial_\nu \phi_{2,k},\quad k\geq1$$
is also fulfilled.
Thus, conditions \eqref{tt2a} imply that
$$\lim_{k\to+\infty}|\lambda_{1,k}-\lambda_{3,k}|=0\quad\textrm{and}\quad \sum_{k=1}^{+\infty}\norm{\partial_\nu\phi_{1,k} -\partial_\nu\phi_{3,k}}_{L^2(\Gamma)}^2<\infty.$$
Then, repeating the arguments  of Lemma \ref{lem-0} we obtain
$$\lim_{\tau\to+\infty} \tilde{S}_1(\lambda(\tau), \eta_1(\tau),\eta_2(\tau))-\tilde{S}_3(\lambda(\tau), \eta_1(\tau),\eta_2(\tau))=0,$$
where 
$$\tilde{S}_j(\lambda,\eta_1,\eta_2)=\left \langle \Lambda_{j,\lambda} \Phi_1 , e^{{\rm i}\overline{\sqrt{\lambda}} {\eta}_2 \cdot x} e^{{\rm i}\tilde{\psi}_2}\right\rangle,\quad j=1,3$$
with 
$$\tilde{\psi}_2(x)=\int_{-\infty}^{ x\cdot\eta_2} \eta_2\cdot A_{1,\sharp}(x+(s-x\cdot\eta_2)\eta_2)ds,\quad b_2=1$$
and $\Lambda_{3,\lambda}$ the Dirichlet-to-Neumann map associated to problem \eqref{eq1} for $A=A_1$ and $V=V_2$. Then, in view of Lemma \ref{l66} we have $V_1=V_2$. This completes the proof of Theorem \ref{thm-1}.\qed

\section*{Acknowledgements}

 The author would like to thank the anonymous referee  for valuable comments and helpful remarks.  The author is grateful to  Otared Kavian and Eric Soccorsi   for their   remarks and  suggestions.

\end{document}